\documentclass[12pt]{article}
\usepackage[utf8]{inputenc}
\usepackage[T1]{fontenc}

\usepackage{needspace}
\usepackage{geometry}
\usepackage{amsthm, amsmath, amssymb}
\usepackage{thmtools}
\usepackage{thm-restate}

\usepackage{float}
\usepackage{xcolor}
\usepackage{enumerate}
\usepackage{hyperref}
\usepackage{graphicx}

\usepackage{subcaption}
\usepackage{ifthen}

\setlength{\marginparwidth}{2.5cm}
\usepackage[color=red!30]{todonotes}

\hypersetup{
  pdftitle = {Sparse graphs without long induced paths},
  pdfauthor = {O. Defrain and J.-F. Raymond},
  colorlinks = true,
  linkcolor = black!30!red,
  citecolor = black!30!green
}

\newtheorem{theorem}{Theorem}[section]

\newtheorem{lemma}[theorem]{Lemma}
\newtheorem*{lemma*}{Lemma}

\newtheorem{conjecture}[theorem]{Conjecture}
\newtheorem{question}[theorem]{Question}

\newtheorem{remark}[theorem]{Remark}

\newtheorem{claim}[theorem]{Claim}

\theoremstyle{remark}

\newcommand{\IS}{\mathcal{N}}
\newcommand{\N}{\mathbb{N}}
\newcommand{\R}{\mathbb{R}}
\DeclareMathOperator{\pred}{{\sf pred}}
\DeclareMathOperator{\bpred}{{\sf bpred}}
\DeclareMathOperator{\tpred}{{\sf tpred}}
\DeclareMathOperator{\depth}{{\sf depth}}

\DeclareMathOperator{\ribs}{{\sf ribs}}

\DeclareMathOperator{\col}{{\sf col}}

\def\cqedsymbol{\ifmmode$\lrcorner$\else{\unskip\nobreak\hfil
\penalty50\hskip1em\null\nobreak\hfil$\lrcorner$
\parfillskip=0pt\finalhyphendemerits=0\endgraf}\fi} 
\newcommand{\cqed}{\renewcommand{\qed}{\cqedsymbol}}

\newcommand{\intv}[2]{\left \{ #1, \dots, #2 \right \}}

\renewenvironment{abstract}
{\small\vspace{-1em}
\begin{center}
\bfseries\abstractname\vspace{-.5em}\vspace{0pt}
\end{center}
\list{}{
\setlength{\leftmargin}{0.6in}%
\setlength{\rightmargin}{\leftmargin}}%
\item\relax}
{\endlist}

\title{%
Sparse graphs without long induced paths\thanks{This work was supported by the ANR projects GRALMECO (ANR-21-CE48-0004) and DISTANCIA (ANR-17-CE40-0015).}}
\author{Oscar Defrain\thanks{Aix Marseille Université, CNRS, LIS, Marseille, France}\and Jean-Florent Raymond\thanks{ CNRS, LIMOS, Université Clermont-Auvergne, Clermont-Ferrand, France.}}
\date{April 19, 2023}

\begin{document}

\maketitle

\begin{abstract}
    Graphs of bounded degeneracy are known to contain induced paths of order $\Omega(\log \log n)$ when they contain a path of order~$n$, as proved by Ne\v{s}et\v{r}il and Ossona de Mendez (2012). In 2016 Esperet, Lemoine, and Maffray conjectured that this bound could be improved to $\Omega((\log n)^c)$ for some constant $c>0$ depending on the degeneracy.
    
    We disprove this conjecture by constructing, for arbitrarily large values of~$n$, a graph that is 2-degenerate, has a path of order $n$, and where all induced paths have order $O((\log \log n)^2)$. We also show that the graphs we construct have linearly bounded coloring numbers. 

    \vskip5pt\noindent{}{\bf Keywords:} typical induced subgraphs, degeneracy, coloring numbers.
\end{abstract}

\section{Introduction}

In this paper logarithms are binary. For an integer $k$, a graph $G$ is said to be \emph{$k$-degenerate} if every subgraph of $G$ has a vertex of degree at most $k$. The minimum $k$ such that $G$ is $k$-degenerate is called the \emph{degeneracy} of $G$.
Bounded degeneracy forms a very general notion of graph sparsity that includes classes of bounded treewidth, planar graphs, and more generally graph classes of bounded expansion. Because its definition sets constraints on all subgraphs, it is also used in numerous inductive proofs and greedy algorithms, for instance for graph coloring.

In the \emph{Sparsity} textbook~\cite{nevsetvril2012sparsity}, Ne\v{s}et\v{r}il and Ossona de Mendez proved that in graph classes of bounded degeneracy, paths and induced paths are tied in the following sense.

\begin{theorem}[\cite{nevsetvril2012sparsity}]\label{th:nodm}
Let $k \in \N$ and let $G$ be a $k$-degenerate graph. If $G$ has a path of order $n$, then $G$ has an induced path of order at least
\[
\frac{\log \log n}{\log (k+1)}.
\] 
\end{theorem}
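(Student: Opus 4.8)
The plan is to prove the contrapositive in quantitative form: if $G$ is $k$-degenerate, contains a path on $n$ vertices, but contains \emph{no} induced path on $\ell$ vertices, then $n$ is at most an explicit bound $B(k,\ell)$ that is doubly exponential in $\ell$ with base controlled by $k$, roughly $2^{(k+1)^{\ell}}$. Solving $n\le B(k,\ell)$ for $\ell$ and taking logarithms twice then gives $\ell\gtrsim\log\log n/\log(k+1)$, so applying this with $\ell$ one more than the order of a longest induced path yields the theorem (the exact inequality following by tracking the constants). I would first reduce to $G$ connected (pass to the component containing the long path; degeneracy is preserved) and record two facts. (i) A shortest path between two vertices is always induced, so the hypothesis forces every component of $G$ to have diameter at most $\ell-2$, hence a BFS from any vertex stratifies $V(G)$ into at most $\ell-1$ layers. (ii) Fixing an ordering of $V(G)$ witnessing $k$-degeneracy (each vertex has at most $k$ neighbours among its predecessors), one can build an induced path greedily inside any set $S$: start at a vertex and repeatedly step to its neighbour in $S$ of smallest index; minimality of the chosen neighbour prevents chords, so the walk obtained is an induced path whose length is the ``depth'' of the start vertex in $G[S]$. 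Thus either some vertex has depth $\ge\ell-1$, giving a long induced path directly, or all depths are at most $\ell-2$.

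The core is to bound $n$, which I would attempt by induction on $\ell$ after strengthening to a rooted statement: bound the order of a path starting at a prescribed vertex $s$, assuming $G$ has no induced path on $\ell$ vertices starting at $s$. The inductive step would pick a vertex $x$ of degree at most $k$ in $G[V(P)]$ (it exists because $G[V(P)]$ is $k$-degenerate and has the Hamiltonian path $P$), delete $x$ and all but one of its at most $k$ neighbours on $P$, keeping one neighbour $y$. Then $P$ breaks into at most $k+1$ sub-paths, and in the surviving graph $x$ has $y$ as its unique neighbour; hence any induced path on $\ell-1$ vertices in the surviving graph ending at $y$ extends, by prepending $x$, to an induced path on $\ell$ vertices in $G$ ending at $x$. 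Feeding the surviving sub-paths (suitably rooted) into the induction at parameter $\ell-1$ and adding up then controls $n$. The base cases are easy: no induced path on $3$ vertices means $G$ is a disjoint union of cliques, each of order at most $k+1$ by degeneracy, so $n\le k+1$.

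The step I expect to be the main obstacle is making this recursion \emph{multiplicative} rather than merely additive. A single low-degree deletion cuts $P$ into a bounded number of pieces and so loses only a constant factor, which would yield a single-exponential bound and the wrong dependence on $k$. To get $B(k,\ell)\le B(k,\ell-1)^{k+1}$ — equivalently, to pass from a path on $n$ vertices to a $k$-degenerate subgraph carrying a path on about $n^{1/(k+1)}$ vertices while lowering the forbidden-length parameter by one — one must combine the deletions with the bounded number of BFS layers and the degeneracy ordering so that the loss compounds over the surgery. Establishing exactly this amplification is the technical heart of the proof, and it is what forces the factor $\log(k+1)$ into the denominator of the final bound; once it is in place, unwinding the recursion from the base case gives $n\le B(k,\ell)$ and hence the claimed lower bound on the order of a longest induced path.
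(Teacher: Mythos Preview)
The paper does not contain a proof of Theorem~\ref{th:nodm}. That result is quoted as background from the \emph{Sparsity} monograph of Ne\v{s}et\v{r}il and Ossona de~Mendez and is never reproved here; the present paper is about the \emph{upper} bound (disproving Conjecture~\ref{conj:esp}), not the lower bound. So there is no ``paper's own proof'' to compare your proposal against.

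As for the proposal itself: it is not a proof but a plan whose decisive step is explicitly missing. You correctly compute that the target inequality is equivalent to
\[
n \;\le\; 2^{(k+1)^{m}}
\]
when the longest induced path has order $m$, and you set up a recursion by deleting a vertex $x$ of degree at most $k$ in $G[V(P)]$, keeping one neighbour $y$, and noting that an induced path of length $\ell-1$ ending at $y$ extends through $x$. But this surgery breaks $P$ into at most $k+1$ subpaths while dropping the parameter by one, so the recursion it yields is
\[
B(k,\ell)\;\le\;(k+1)\,B(k,\ell-1)+O(k),
\]
which solves to $B(k,\ell)=O((k+1)^{\ell})$ --- single exponential, hence only $\ell \ge \Omega(\log n/\log(k+1))$, not the claimed $\log\log n/\log(k+1)$. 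You acknowledge this and assert that combining the BFS layering with the degeneracy ordering will upgrade the recursion to the multiplicative form $B(k,\ell)\le B(k,\ell-1)^{k+1}$, but you give no mechanism for this, and nothing in the sketch suggests one: a single low-degree deletion simply cannot produce a subpath of order $n^{1/(k+1)}$ in general. The ``amplification'' you defer is not a routine detail; it is the entire content of the theorem at this level of strength, and the outline as written does not reach it.

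If you want to complete an argument along these lines, you would need a genuinely different inner step --- for instance, one pass that shortens the path by a factor depending on the \emph{current} path length (not just on $k$) while reducing $\ell$ by one. The textbook proof achieves the doubly-logarithmic bound by a different route (via the $\log n$ lower bound on treedepth coming from the long path, together with a relationship between treedepth, degeneracy, and induced paths); your BFS/diameter observation is a first cousin of that idea, but the bridge from ``diameter $\le \ell$'' to ``path length $\le 2^{(k+1)^{\ell}}$'' still has to be built.
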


As every induced path is a path, the above result establishes a duality between long paths and induced paths for graphs of bounded degeneracy. Relating these objects is a very natural goal as they are among the most basic structures in graph theory.  So it should not be a surprise that it was already investigated decades earlier, when Galvin, Rival, and Sands proved the following general result.

\begin{theorem}[\cite{galvin1982ramsey}]\label{th:grs}
There is a function $f \colon \N^2 \to \R$ such that for every $s$, if $G$ is a $K_{s,s}$-subgraph free graph that has a path of order $n$, then $G$ has an induced path of order at least $f(n,s)$.
\end{theorem}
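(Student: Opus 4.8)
\medskip
\noindent\textbf{Proof plan.}
Write $P = v_1 v_2 \cdots v_n$. Since deleting vertices cannot create edges, I would first replace $G$ by $G[\{v_1,\dots,v_n\}]$: this is still $K_{s,s}$-subgraph free, still has $P$, and any induced path we find in it is induced in the original graph. Call an edge $v_iv_j$ with $j\ge i+2$ a \emph{chord}; if $P$ has no chord then $P$ is itself an induced path of order $n$. The point of the proof is to cope with chords, and the first thing to notice is that one cannot simply look for a long chord--free subsegment of $P$: a graph with only $O(\sqrt n)$ chords — hence very far from containing any $K_{2,2}$ — can already have a chord inside every interval of length $\sqrt n$. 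So the induced path we extract will in general have to \emph{use} chords, jumping along $P$.

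The plan is an Erd\H{o}s--Szekeres style argument. For each $i$, let $\alpha(i)$ be the largest $\ell$ such that there exist indices $j_1 < j_2 < \cdots < j_\ell = i$ for which $v_{j_1}v_{j_2}\cdots v_{j_\ell}$ is an induced path (a \emph{monotone} induced path ending at $v_i$); note $\alpha(i)\ge 1$ always. I would show $\max_i\alpha(i)\ge f(n,s)$, which suffices. Suppose instead that $\alpha(i)\le L$ for all $i$. Fix $i$ and an induced path $Q_i$ of order $\alpha(i)$ ending at $v_i$. Because $v_iv_{i+1}\in E(G)$, either $v_{i+1}$ has no neighbour among the non--final vertices of $Q_i$ — in which case $Q_iv_{i+1}$ is a monotone induced path of order $\alpha(i)+1$ ending at $v_{i+1}$ — or it does. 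Iterating this, $Q_i$ extends rightward along $P$ one vertex at a time, and the process must stop within $L$ steps, since a further successful step would produce a monotone induced path of order $>L$. Hence there are an offset $1\le c_i\le L$ and a vertex $y_i$, not the final vertex of the (extended, still induced) path, with $v_{i+c_i}$ adjacent to $y_i$; moreover, choosing $y_i$ to be the neighbour of $v_{i+c_i}$ of smallest index on this path, one even gets that truncating the path at $y_i$ and appending $v_{i+c_i}$ is again a monotone induced path. This is a \emph{blocking certificate} for $i$.

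Each certificate is described by a bounded amount of data — the value $\alpha(i)$, the offset $c_i$, and the position of $y_i$ on the relevant path — so there are $L^{O(1)}$ possible ``shapes''; by pigeonhole a $1/L^{O(1)}$ fraction of all indices share one shape, and by deleting all but a $1/(2L)$ fraction I may assume these indices are pairwise more than $2L$ apart, so their certificates are parallel and their ``$v_{i+c_i}$'' vertices are distinct. The remaining, and crucial, step is to turn this family of parallel blocking certificates into a copy of $K_{s,s}$, which contradicts the hypothesis once the family (hence $n$) is large enough. \emph{This is the step I expect to be the main obstacle.} A priori the certificates only exhibit a \emph{diagonal} set of forbidden edges $\{y_iv_{i+c_i}\}$ — essentially a matching — whereas $K_{s,s}$ demands a complete bipartite pattern; promoting the diagonal adjacencies to a biclique appears to require either iterating the whole scheme, or propagating the blocking relation across different certificates using the linear order of $P$ together with the maximality defining $\alpha$. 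It is exactly this repeated pigeonholing/Ramsey step that makes the bound weak, so that the resulting $f(n,s)$ will be of iterated--logarithmic (or inverse--Ackermann) growth in $n$. Along the way one must also check the two routine points glossed over above — that the greedy rightward extension keeps the path induced (immediate from the extension rule) and that the ``well--separated'' thinning costs only a bounded factor.
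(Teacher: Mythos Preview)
The paper does not contain a proof of Theorem~\ref{th:grs}; it is quoted from Galvin, Rival, and Sands as a background result, so there is no in-paper argument to compare your attempt against.

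As for the proposal on its own terms: you have correctly located the crux, and it is a genuine gap rather than a deferred routine check. After pigeonholing on shape and thinning, your blocking certificates yield a large family of edges $y_i v_{i+c_i}$ with the $v_{i+c_i}$ pairwise distinct --- but that is a matching-type configuration, and a matching of any size is $K_{2,2}$-free. Nothing in the setup forces $y_i$ to be adjacent to $v_{j+c_j}$ for $j\neq i$, nor forces many of the $y_i$ to coincide (indeed, the $y_i$ live on \emph{different} extended paths $Q_i$, so ``same position on the path'' does not mean ``same vertex''). Hence the sentence ``turn this family of parallel blocking certificates into a copy of $K_{s,s}$'' is not a bookkeeping step but the entire content of the theorem; your own remark that this ``appears to require either iterating the whole scheme, or propagating the blocking relation'' is an accurate admission that the mechanism is missing. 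The Erd\H{o}s--Szekeres framing with $\alpha(i)$ and the greedy rightward extension are fine and do give the dichotomy ``either $\alpha$ increases or some earlier vertex on the current path sees $v_{i+c_i}$'', but without a concrete way to aggregate these diagonal adjacencies into a biclique the argument does not close.
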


This result can be related to the study of \emph{unavoidable} (or \emph{typical}) substructures of a graph class $\mathcal{G}$ (i.e., substructures that can be found in every member of $\mathcal{G}$), that originates from Ramsey's theorem and is currently receiving increasing attention when these structures are induced subgraphs.

If we consider graph classes that are closed under taking induced subgraphs, the above result describes the most general setting where such a function $f$ could exist. Indeed, cliques and bicliques have paths that visit all their vertices, yet they do not have induced paths on more than 2 or 3 vertices, respectively. So any hereditary class where a result such as Theorem~\ref{th:grs} holds should exclude arbitrarily large cliques and bicliques, and this amounts to excluding arbitrarily large biclique subgraphs.
On the other hand, the function~$f$ defined in the proof of Theorem~\ref{th:grs} is not optimal for many $K_{s,s}$-subgraph free graph classes (see below) and possibly not optimal neither for the statement of the theorem.

Relating the maximum orders of paths and induced paths is also of interest in the context of graph sparsity theory because of the following connection to the parameter treedepth.
For a graph $G$, if we denote by $\ell$ the maximum order of an induced path in it and by $t$ its treedepth then
\[
\log (\ell+1) \leq t \leq \ell.
\]
Therefore in graph classes where bounds such as that of Theorem~\ref{th:grs} hold, treedepth is not only tied to long induced paths but also to long paths.
We refer to Chapter~6 in \cite{nevsetvril2012sparsity} for a definition of treedepth, a proof of the above inequality, and a discussion on the links between induced paths and treedepth.

Following the proof of Theorem~\ref{th:nodm}, the authors of \emph{Sparsity} ask about the best possible bound.
\begin{question}[{\cite[Problem 6.1]{nevsetvril2012sparsity}}]\label{que:nodm}
For every $k\in \N$, what is the maximum function $f_k\colon \N\to \N$ such that every $k$-degenerate graph $G$ that has a path of order $n$ has an induced path of order at least $f_k(n)$?
\end{question}

To the best of our knowledge, the current bounds on $f_k$, for $k\geq 3$ and $n \geq 1$,\footnote{The left-hand bound also holds for $k=2$. When $k<2$ we trivially have $f_k(n) = n$.} are the following:
\[
\frac{\log \log n}{\log (k+1)} \quad \leq \quad  f_k(n) \quad \leq \quad k(\log n)^{2/k}.
\]
The left-hand inequality is Theorem~\ref{th:nodm} and the right-hand bound, due to Esperet, Lemoine, and Maffray, is proved by the construction, for every $t\geq 4$, of an infinite sequence of chordal graphs of clique number $t$ (hence $(t-1)$-degenerate) where the induced paths can be shown to be all short~\cite{esperet2017long}.\footnote{We note that the analysis of the construction of \cite{esperet2017long} cannot be improved much as such graphs are known to have induced paths of order $\smash{\Omega((\log n)^{1/t})}$, as later proved in~\cite{hilaire2022long}.} The gap between the upper- and lower-bounds above is exponentially wide so it is indeed an interesting goal to investigate the order of magnitude of $f_k$.

Actually, for several classes of degenerate graphs the lower-bound can be drastically improved. 
A representative example is the following result of Esperet et al.~stating that planar graphs contain induced paths of polylogarithmic order.

\begin{theorem}[\cite{esperet2017long}]
\label{th:elm}
    There is a constant $c$ such that if $G$ is a planar graph and has a path of order $n$, then $G$ has an induced path of order at least $c \sqrt{\log n}$.
\end{theorem}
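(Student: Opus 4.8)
\emph{Strategy.} The plan is to reduce to planar graphs of bounded treewidth and then show that such a graph, if it contains a long path, already contains a long induced path. First I would replace $G$ by $G[V(P)]$: this keeps the graph planar and connected, keeps $P$ as a (now Hamiltonian) path, and every induced path of $G[V(P)]$ is an induced path of $G$, so it suffices to find a long induced path here; write $n=|V(G)|$ and $P=v_1\cdots v_n$. Let $\ell$ be the order of a longest induced path of $G$. Since a shortest path between two vertices is always an induced path, it has at most $\ell$ vertices, so $G$ has diameter, and hence radius, at most $\ell-1$; and it is well known that a planar graph of radius $r$ has treewidth at most $3r$. Thus $G$ is a planar graph of treewidth at most $3\ell$ carrying a Hamiltonian path.

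\emph{The key lemma.} The main ingredient should be the statement that \emph{a planar graph of treewidth at most $k$ that contains a path of order $N$ contains an induced path of order $\Omega((\log N)/k)$}. Granting it and applying it with $k=3\ell$ and $N=n$, we obtain an induced path of order at least $c\log n/(3\ell)$ for some absolute constant $c>0$; since this is at most $\ell$ by definition of $\ell$, it follows that $\ell^2\ge (c/3)\log n$, i.e.\ $\ell=\Omega(\sqrt{\log n})$, which is the theorem (the constant being adjusted to absorb small values of $n$).

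\emph{The obstacle.} Proving the key lemma is the step I expect to be hard, and it is where planarity must genuinely enter: the analogous statement for graphs of bounded treewidth is false — this is witnessed by the chordal construction of Esperet, Lemoine and Maffray discussed above — so a purely treewidth-based induction cannot work. I would nonetheless try an induction on $N$ based on a balanced separator: a bag $X$ of a width-$k$ tree decomposition with $|X|\le k+1$ and each component of $G-X$ of order at most $N/2$. Since $P$ is Hamiltonian, its vertices in $X$ split it into at most $k+2$ subpaths, each contained in a single component of $G-X$, so $G-X$ has at most $k+2$ components and one of them, $C$, contains a subpath of $P$ of order $\Omega(N/k^2)$. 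Recursing inside $G[C]$ then yields an induced path of order $\Omega((\log N)/k)-O(1)$, and the delicate point is to recover the lost $O(1)$ by extending this path through one or two vertices of $X$ without creating a chord. This seems to require controlling where the recursively produced induced path ends, so I would strengthen the induction hypothesis to also deliver a suitable endpoint, and it is here that the finer structure of planar graphs — their layered treewidth being at most $3$, or a $2$-page book embedding with the Hamiltonian path on the spine when one is available — should be exploited to keep the separators, the recursion, and the endpoints compatible. Carrying this out, together with the base cases and the case where $G-X$ is already disconnected, is the bulk of the work.
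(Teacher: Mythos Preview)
This theorem is not proved in the present paper; it is quoted from \cite{esperet2017long} as background. There is therefore no in-paper proof to compare your proposal against, and I comment on the sketch on its own terms.

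Your outer reduction is correct and is in fact the skeleton of the argument in \cite{esperet2017long}: passing to $G[V(P)]$, bounding the diameter (hence the radius) by $\ell-1$ because shortest paths are induced, invoking the Robertson--Seymour bound to obtain treewidth $O(\ell)$ for a planar graph of bounded radius, and then bootstrapping the inequality $\ell \ge c(\log n)/\ell$ into $\ell = \Omega(\sqrt{\log n})$, is exactly the right shape.

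The gap is precisely where you locate it, and the proof you propose for the key lemma does not close it. In the balanced-separator recursion, once an induced path $R$ has been produced inside the chosen component $C$, you must extend $R$ by at least one vertex into the separator $X$ at \emph{every} level of the recursion to absorb the additive loss. Nothing in your setup controls the endpoints of $R$: they may have no neighbour in $X$ at all (the long subpath of $P$ you recurse on need not touch $X$), or every neighbour they have in $X$ may already be adjacent to an interior vertex of $R$. Strengthening the induction to prescribe an endpoint conflicts with choosing the separator for balance rather than for position, and neither bounded layered treewidth nor a $2$-page book embedding hands you this endpoint control for free --- those tools constrain how bags or pages meet layers, not where a recursively constructed induced path terminates. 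You are also right that planarity is essential here: the chordal graphs of clique number $t$ from \cite{esperet2017long} have treewidth $t-1$ yet only induced paths of order $O((\log n)^{2/(t-1)})$, which for $t\ge 4$ is eventually far below $(\log n)/(t-1)$, so no treewidth-only argument can give your key lemma. The proof in \cite{esperet2017long} organises the induction differently so that the extension problem never arises; the generic balanced-separator scheme you sketch would need to be replaced rather than patched.
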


The result holds more generally for graph classes of bounded Euler genus, as shown in the same paper. This led its authors to conjecture, as an answer to Question~\ref{que:nodm}, that polylogarithmic bounds could hold for graphs of bounded degeneracy.

\begin{conjecture}[{\cite[Conjecture 1.1]{esperet2017long}}]\label{conj:esp}
For every $k\in \N$ there is a constant $c>0$ such that if $G$ is a $k$-degenerate graph and has a path of order $n\geq 1$, then $G$ has an induced path of order at least $(\log n)^{c}$.
\end{conjecture}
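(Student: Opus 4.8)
The statement is a \emph{conjecture}, and the purpose of this paper is to refute it; so the plan is not to prove Conjecture~\ref{conj:esp} but to construct a counterexample, in fact already at degeneracy~$2$. Concretely, the aim is a family of graphs $(G_d)_{d\ge 0}$ such that $G_d$ is $2$-degenerate, has a Hamiltonian path (hence a path of order $n_d:=|V(G_d)|$), and has longest induced path of order $O((\log\log n_d)^2)$, where I write $\ell(G)$ for the order of a longest induced path of $G$; arbitrary values of $n$ are then obtained afterwards by attaching a pendant path. Two facts make such a family believable. First, the lower bound of Theorem~\ref{th:nodm} is only $\Theta(\log\log n)$ even for $k=2$, so there is room for the truth to sit very close to $\log\log n$. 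Second, the chordal graphs of Esperet, Lemoine and Maffray are at least $3$-degenerate and only force induced paths of order $\Theta((\log n)^{2/k})$, which is polylogarithmic rather than polyloglogarithmic; beating them by an exponential requires a genuinely recursive construction whose size grows \emph{doubly} exponentially in the recursion depth, so that the depth is $\Theta(\log\log n_d)$.

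The construction I would use is iterative. Start with $G_0$ a single edge. Given $G_{d-1}$, build $G_d$ from a large number of disjoint copies of $G_{d-1}$ -- roughly $n_{d-1}$ of them, so that $n_d\ge n_{d-1}^{2}$ -- wired together through a sparse \emph{skeleton} plus a \emph{bounded} number of high-degree ``apex'' vertices. (The point of working at degeneracy~$2$ is that $2$-degenerate graphs may still contain vertices of unbounded degree, e.g.\ stars, so domination-type tricks remain available.) The skeleton must serve three purposes at once: (i) route a Hamiltonian path of $G_d$ by concatenating the Hamiltonian paths of the copies with connector edges; (ii) keep $G_d$ $2$-degenerate, which I would certify by an explicit vertex ordering -- apex vertices first (back-degree~$0$), then, copy by copy, the vertices of each copy in its own good order with the copy's ``root'' placed early (so a root sees one apex plus at most one vertex inside its copy), connector edges contributing at most $1$ to their later endpoint; (iii) guarantee that every induced path of $G_d$ enters \emph{at most one} copy substantially and otherwise uses only $O(d)$ vertices of the skeleton. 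Granting (iii), induction gives $\ell(G_d)\le \ell(G_{d-1})+O(d)$, hence $\ell(G_d)=O(d^2)$; since $n_d$ is doubly exponential in $d$, this is $O((\log\log n_d)^2)$, as required.

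The heart of the argument, and the step I expect to be the main obstacle, is item (iii): showing that an induced path cannot cheat by threading through many copies. The idea is to arrange the inter-copy connections so that leaving one copy and later re-entering the ``copy world'' forces the path to pass near an apex vertex or through a dominated bottleneck of the skeleton, and such a passage creates a chord unless the path has effectively confined itself to a single copy. One then argues that the intersection of any induced path with the ``copy part'' of $G_d$ lies, up to $O(1)$ vertices, inside one copy, while its intersection with the skeleton is short because the skeleton is itself built recursively sparsely and carries its own chord structure. Making ``near an apex / through a bottleneck'' precise, and choosing the skeleton so that (i)--(iii) hold simultaneously without conflicting, is the real work; the rest (the size recursion, the Hamiltonian path, the degeneracy ordering, the final padding to an arbitrary $n$) is bookkeeping. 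Two remarks to close. By Theorem~\ref{th:nodm} with $k=2$ we have $\ell(G_d)=\Omega(\log\log n_d)$, so this analysis would be tight up to the square and to constant factors. And the same explicit vertex ordering that witnesses $2$-degeneracy should, by a short count showing that few vertices are weakly $r$-reachable from any given vertex (few apices, loosely attached copies), also yield $\col_r(G_d)=O(r)$, giving the stated bound on the coloring numbers.
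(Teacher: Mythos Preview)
Your high-level plan (recursive, doubly-exponential size, $2$-degenerate via an explicit ordering, Hamiltonian) points in the right direction, but step~(iii) is a genuine gap, and the gap is fatal rather than cosmetic. An apex-based chord argument of the kind you sketch yields only ``at most \emph{two} copies,'' not one: if the apex $a$ lies on the induced path, the two halves of $P\setminus\{a\}$ can each dive deep into a separate copy, and nothing breaks the symmetry between them. That gives $\ell(G_d)\le 2\,\ell(G_{d-1})+O(1)$, hence $\ell(G_d)=O(2^d)=O(\log n_d)$, which is polylogarithmic and does \emph{not} refute the conjecture. To beat every power of $\log n$ the recursion must carry no multiplicative constant above~$1$; your sketch gives no mechanism for that, and ``dominated bottlenecks'' suffer the same two-sides problem as a single apex.

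The paper's construction does not use a flat ``copies $+$ skeleton $+$ apex'' scheme, and its analysis is not the recursion $\ell(G_\ell)\le \ell(G_{\ell-1})+O(\ell)$. It blows up one complete binary tree of depth $h(\ell)\approx 5\cdot 2^{\ell-1}$ (each node becomes a triangle, each tree edge a pair of length-$3$ paths) and then adds \emph{ribs} governed by a nested system $\IS_\ell$ of depth intervals, itself built from two shifted copies of $\IS_{\ell-1}$ inside one enclosing interval: whenever $(i,j)\in\IS_\ell$ and $s\prec t$ sit at depths $i,j$, four edges join the top edge of $K^s$ to the subdivision vertices just above~$K^t$. The analysis then introduces a rank function $\tau(v)$ (the minimum rank of a source whose interval strictly contains $\pi(v)$), shows that an induced path with no ``$Q$-special'' source is bimonotone in depth and monotone in $\tau$ on each half, hence already of length $O(\ell)$; and separately that at most two sources of \emph{each} rank are $Q$-special. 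Here ``two'' is harmless because it enters \emph{additively}: removing the at most $2\ell$ special bags cuts $Q$ into $O(\ell)$ subpaths, each of length $O(\ell)$, giving $O(\ell^2)$ in total. The nested ribs and the resulting $\tau$-monotonicity are precisely the device that replaces your unproven ``one copy'' hypothesis; a flat apex scheme has no analogue of~$\tau$.
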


Recently, Hilaire and the second author extended the catalog of graph classes known to satisfy Conjecture~\ref{conj:esp} with the following result.

\begin{theorem}[\cite{hilaire2022long}]
\label{th:claire}
    For every graph class $\mathcal{G}$ that is closed under topological minors there is a constant $c>0$ such that if a graph $G \in \mathcal{G}$ has a path of order $n\geq 1$, it has an induced path of order at least $(\log n)^c$.
\end{theorem}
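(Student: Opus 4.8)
I would first observe that, since $\mathcal{G}$ is closed under topological minors and is not the class of all graphs (which fails the statement because of complete graphs), it excludes some graph as a topological minor, and hence excludes $K_t$ as a topological minor for some~$t$ (as $K_t$ has every $t$-vertex graph as a subgraph). So I would work with a $K_t$-topological-minor-free graph~$G$. I would also reformulate the goal: it suffices to prove that there is a constant $d=d(t)$ such that every $K_t$-topological-minor-free graph whose induced paths all have order at most~$\ell$ has no path of order greater than $2^{\ell^{d}}$. Indeed, applying this to $G$ with its longest path (of order~$n$) and longest induced path (of order~$\ell$) gives $n\le 2^{\ell^{d}}$, i.e. $\ell\ge(\log n)^{1/d}$, which is the statement with $c=1/d$. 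Equivalently, I want: a bounded longest induced path forces a not-too-large longest path in these classes --- a quantitative version of the ``treedepth is tied to long paths'' phenomenon mentioned above.

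To prove this reformulation I would invoke the structure theorem of Grohe and Marx for graphs excluding a topological minor: there is $k=k(t)$ such that $G$ has a tree decomposition of adhesion at most~$k$ in which every torso, after removing at most~$k$ apex vertices, is either (i)~of maximum degree at most~$k$, or (ii)~$k$-almost-embeddable in a surface of Euler genus at most~$k$. I would then bound the longest path inside a single torso~$B$ having no induced path of order~$>\ell$. In case~(i) the chord-counting argument behind Theorem~\ref{th:nodm} already gives the right bound: a path of order~$N$ in a graph of maximum degree~$k$ has at most $kN$ chords, so one can extract an induced sub-path of order $\Omega(\log_k N)$, whence $N=2^{O_k(\ell)}$. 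In case~(ii), deleting the $\le k$ apices (and trimming the bounded-depth vortices) shortens the longest path by only a bounded factor and creates no induced paths, after which Theorem~\ref{th:elm}, in its bounded-Euler-genus form from the same paper, gives an induced path of order $\Omega(\sqrt{\log N})$ and thus $N=2^{O_k(\ell^2)}$. So a torso with no induced path of order~$>\ell$ has no path of order~$>2^{\ell^{O_k(1)}}$; moreover, in case~(i) the induced path found is a sub-path of the given path, which keeps it induced in~$G$, and in case~(ii) shortest paths of~$G$ are induced in~$G$ --- this is what lets the local bounds talk about~$G$ rather than just about~$B$.

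The remaining, and main, task is to lift the per-torso bound to $G$ along the tree decomposition. Given a longest path~$Q$ of~$G$, I would argue by a dichotomy. Either $Q$ meets some single bag in a sub-path of order at least, say, $|Q|^{1-\varepsilon}$, in which case the torso bound above (together with the fact that an induced path meets each adhesion clique in at most two vertices, so that induced paths of a torso translate into induced paths of~$G$ up to a bounded factor) already yields a long induced path; or $Q$ is spread thinly, so that it traverses a long path $t^{(1)}\!-\!\cdots\!-\!t^{(s)}$ of nodes of the decomposition tree whose consecutive adhesion sets $S_1,\dots,S_{s-1}$ are separators of size~$<k$. In the latter case I would pick, for each~$j$, a vertex $x_j\in S_j$ that lies on~$Q$; since $S_j$ separates the part of~$G$ before $t^{(j+1)}$ from the part after it, the $x_j$'s on~$Q$ form a path with chords of bounded ``reach'' (an $x_j$ can only be adjacent to $x_{j'}$ for $|j-j'|$ bounded in terms of $k$ and the local degrees), and running the Ne\v{s}et\v{r}il--Ossona de Mendez counting on this substructure produces an induced path of order $\Omega(\log_k s)$. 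Feeding this dichotomy into a recursion on $|V(G)|$ (or on the decomposition) is what should give the bound $2^{\ell^{O_t(1)}}$ on the longest path.

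The hard part is precisely this last step: one has to make the ``concentrated versus spread out'' split quantitatively tight enough that each level of the recursion costs only a constant factor in the exponent, so that $d$ ends up an honest constant depending only on~$t$ rather than growing with the (unbounded) depth of the decomposition tree. The delicate interactions to control are between the apex vertices --- which may be adjacent to a large portion of~$Q$ at once and so escape the degree-based counting --- and the small separators, and between the high-degree vertices of the surface parts and the requirement that spreading along the tree genuinely forces a path-like induced structure rather than collapsing onto a few such vertices.
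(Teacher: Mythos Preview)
The paper does not contain a proof of Theorem~\ref{th:claire}: it is quoted as a result of Hilaire and the second author~\cite{hilaire2022long} and used only as context for Conjecture~\ref{conj:esp}. So there is nothing in the present paper to compare your proposal against.

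That said, your outline is broadly the right shape---the actual proof in~\cite{hilaire2022long} also goes through the Grohe--Marx structure theorem, handles the bounded-degree and almost-embeddable torsos separately, and then lifts along the tree decomposition---but what you wrote is a plan, not a proof, and several of the bridging steps are not correct as stated. Two concrete issues: first, a torso is \emph{not} a subgraph of~$G$ (the adhesion sets are turned into cliques), so an induced path of a torso need not be induced in~$G$, and conversely a path of~$G$ restricted to a bag need not be a path of the torso; your sentence ``in case~(i) the induced path found is a sub-path of the given path, which keeps it induced in~$G$'' conflates these objects. Second, in the ``spread out'' branch, picking one vertex $x_j$ per adhesion set and asserting that the $x_j$'s form a path with chords of bounded reach is not justified: adhesion vertices can be apices of neighbouring torsos and hence adjacent to $x_{j'}$ for arbitrarily distant~$j'$. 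You correctly flag the recursion/apex interaction as the delicate point, but the mechanism you sketch does not yet handle it. The paper~\cite{hilaire2022long} resolves these by working with carefully chosen geodesics and a more refined accounting across the decomposition rather than a raw ``concentrated versus spread'' dichotomy.
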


In this paper we disprove Conjecture~\ref{conj:esp} by proving the following statement.

\begin{restatable}{theorem}{main}\label{th:main}
There is a constant $c$ such that for infinitely many integers $n$, there is a 2-degenerate graph $G$ that has a path of order $n$ and no induced path of order more than $c (\log \log n)^2$.
\end{restatable}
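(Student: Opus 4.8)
The plan is to exhibit an explicit family of graphs built by recursion on a parameter~$k$, and to maintain three invariants by induction on~$k$: (i) $2$-degeneracy, (ii) the existence of a path of order $n_k$ with $n_k$ doubly exponential in $k$, and (iii) a bound $\ell_k$ on the order of every induced path that grows only polynomially in $k$ — concretely $\ell_k = O(k^2)$, so that $\ell_k = O((\log\log n_k)^2)$. Picking $n=n_k$ for each $k$ then gives Theorem~\ref{th:main} with an absolute constant $c$.

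For the construction I would take, at step $k$, a suitable number of vertex-disjoint copies of $G_{k-1}$ and weave them into $G_k$ through a ``skeleton'' gadget, adding only vertices of degree at most $2$ in $G_k$ for all the new material. The naive choice — chaining the copies along a bare path — is fatal, as it produces induced paths of order roughly (number of copies)$\,\times \ell_{k-1}$; and even a single ``central'' copy carrying many pendant copies is not enough, since an induced path may still dive deeply into two different copies and already reach length $\approx 2\ell_{k-1}$, which over all levels would only give a polylogarithmic (Esperet--Lemoine--Maffray-type) bound. The skeleton must therefore arrange the copies — through a dense, self-similar system of short shortcuts among them — so that an induced path is essentially confined to a single copy plus a short ``transit'' part; this is what keeps the per-level loss down to $O(k)$. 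Choosing the number of copies at step $k$ roughly equal to $n_{k-1}$ gives $n_k \approx n_{k-1}^2$, hence $n_k = 2^{2^{\Theta(k)}}$ and $\log\log n_k = \Theta(k)$.

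Granting such a gadget, the three invariants are checked as follows. \emph{$2$-degeneracy}: in any subgraph of $G_k$, iteratively delete the new degree-$\le 2$ vertices; what remains is a disjoint union of subgraphs of copies of $G_{k-1}$, so induction applies (the base case being an edge). \emph{Long path}: describe a path of order $n_k$ explicitly, running through (parts of) the copies in skeleton order and using connector vertices to move between consecutive ones. \emph{Induced-path bound}: given an induced path $Q$ in $G_k$, classify its vertices according to the copy of $G_{k-1}$ or the part of the skeleton containing them; using that $Q$ is induced together with the shortcut structure, show that $Q$ cannot alternate between ``deep'' portions of two copies and that its intersection with the skeleton forms a short induced path of order $O(\log\log n_{k-1}) = O(k)$, whence $\ell_k \le \ell_{k-1} + O(k)$ and thus $\ell_k = O(k^2)$.

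The main obstacle is precisely the design and analysis of the skeleton gadget: it must simultaneously respect $2$-degeneracy, carry a long path, and bound induced paths with only an $O(k)$ loss per level. The constraint is essentially tight by Theorem~\ref{th:nodm}, which already forces an induced path of order $\Omega(\log\log n)$ in any $2$-degenerate graph with a path of order $n$; so the gadget has almost no slack and must be engineered so as to defeat the ``follow a degeneracy ordering and extract a long monotone induced sub-path'' argument underlying that theorem. A more routine but still delicate point is the case analysis behind~(iii): an induced path need not follow the long path, may visit copies and skeleton vertices in any order, and may enter and leave a copy through interior vertices, so one must carefully rule out long alternating patterns. Finally, the abstract's claim about linearly bounded coloring numbers would be obtained separately, by exhibiting a recursive vertex ordering and bounding the weak/strong $r$-coloring numbers along it by $O(r)$; this is not needed for Theorem~\ref{th:main} itself.
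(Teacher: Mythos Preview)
Your high-level strategy is correct and matches the paper's shape: a family $(G_\ell)$ with $|G_\ell|=2^{2^{\Theta(\ell)}}$, 2-degeneracy, a Hamiltonian path, and induced paths of order $O(\ell^2)$. But there is a genuine gap: you never actually construct the ``skeleton gadget'', and you explicitly flag its design as ``the main obstacle''. That gadget \emph{is} the theorem; everything else you write (the three invariants, the recursion $\ell_k\le \ell_{k-1}+O(k)$, the doubly-exponential growth) is a wish-list of properties, not a proof. In particular, your proposed mechanism --- vertex-disjoint copies of $G_{k-1}$ linked by shortcut edges so that an induced path is ``confined to a single copy plus a short transit'' --- is exactly the step that is hard to realize while keeping degeneracy $2$, and you give no candidate.

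For comparison, the paper does \emph{not} build $G_\ell$ out of disjoint copies of $G_{\ell-1}$. It starts from the complete binary tree $B_\ell$ of depth $h(\ell)=5\cdot 2^{\ell-1}-2$, blows each node up into a triangle joined to its neighbors by subdivided edges, and then adds \emph{ribs}: four extra edges between the top edge of $K^s$ and the predecessors of each descendant $t$ whenever $(\depth(s),\depth(t))$ lies in a recursively defined nested interval system $\IS_\ell$. The recursion you are looking for lives in $\IS_\ell$ (two shifted copies of $\IS_{\ell-1}$ inside one outer interval), not in the graph itself. The induced-path bound is then obtained not by a clean inequality $\ell_k\le \ell_{k-1}+O(k)$, but by a direct structural analysis: at most two ``$Q$-special'' sources per rank (so $O(\ell)$ in total), and between them the path is bimonotone with $\tau$ nondecreasing, giving length $O(\ell)$ per stretch; multiplying yields $O(\ell^2)$. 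So your outline has the right arithmetic, but the construction and the argument are substantially different from what you sketch, and your proposal as written does not supply either.
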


Note that for every $k\geq 2$, every 2-degenerate graph is also $k$-degenerate. Hence Theorem~\ref{th:main} shows that $f_k(n) = O((\log \log n)^2)$ and indeed disproves Conjecture~\ref{conj:esp} for every value of~$k$.

A natural weakening of Conjecture~\ref{conj:esp} could be to consider graph classes of bounded coloring numbers. For every $r\in \N_{\geq 1}$, the $r$-coloring number is a graph invariant that can be seen as a distance-$r$ version of the degeneracy.\footnote{In this paper we only deal with the coloring numbers sometimes known as strong coloring numbers. See Section~\ref{sec:colnum} for a formal definition.}
We say that a graph class has \emph{bounded coloring numbers} if
there is a function $d\colon \N\to\N$ such that for every $r\in \N_{\geq 1}$, the $r$-coloring number of any $G\in \mathcal{G}$ is at most $d(r)$.
This notion plays an important role in the theory of graph sparsity due to its strong ties to the concept of bounded expansion.\footnote{We will not use the concept of bounded expansion later in this paper so we refrain to define it and refer to~\cite{nevsetvril2012sparsity} for a formal definition and more about its links with the coloring numbers.} In particular, as observed by Norin, if a graph class has coloring numbers bounded by a linear function, then this class has expansion bounded by a linear function (see~\cite[Observation 10]{Esperet2018polyexp}).

As the 1-coloring number is exactly the degeneracy, bounding the coloring numbers of a graph class is substantially stronger than merely bounding the degeneracy. The following shows that even with the requirement that the considered graphs come from a class of linearly bounded coloring numbers, Conjecture~\ref{conj:esp} does not hold.
Let $\mathcal{G}$ denote the class of the graphs from Theorem~\ref{th:main}.

\begin{restatable}{theorem}{linecol}\label{th:linecol}
$\mathcal{G}$ has coloring numbers bounded from above by a linear function.
\end{restatable}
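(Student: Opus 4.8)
The goal is to exhibit, for every graph $G \in \mathcal{G}$ (the class from Theorem~\ref{th:main}), a single linear order $\sigma$ of $V(G)$ that simultaneously witnesses $\col_r(G) \le c\,r$ for every $r \ge 1$, where $c$ is an absolute constant — independent of $G$, and in particular not growing with the number of stages used to build $G$. Since the $r$-coloring number is a minimum over orderings, one good order suffices. The natural candidate is an order $\sigma$ that respects the recursive structure of the construction: list the backbone path first in path order, and then the auxiliary vertices introduced at the successive stages in a way compatible with the nesting of those stages (fixing this order correctly is itself part of the work — a crude \emph{all of the backbone, then stage~$1$, then stage~$2$, \dots} order is likely too weak, while a depth-first traversal of the recursion should behave). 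Two features of $\sigma$ are needed: (i)~$\sigma$ is a $2$-degeneracy order (the backbone has back-degree at most~$1$ in path order, and every auxiliary vertex is attached to exactly two already-present vertices, hence has back-degree at most~$2$); and (ii)~$\sigma$ is \emph{thin upward} in the precise sense below — this is where the construction is genuinely used.

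I would begin from the standard reduction for strong coloring numbers with respect to a fixed order. Write $H_v := G[\{w : w \ge_\sigma v\}]$ and let $W_t(v)$ denote the set of vertices at distance at most $t$ from $v$ inside $H_v$; write $\reach_r(v)$ for the set of vertices strongly $r$-reachable from $v$ (with $G$ and $\sigma$ fixed). If $u \in \reach_r(v)$ then either $u = v$, or $u <_\sigma v$ and $u$ has a neighbour in $W_{r-1}(v)$, because the internal vertices of a witnessing path form a path of length at most $r-1$ from $v$ inside $H_v$. Since $\sigma$ is a $2$-degeneracy order and each vertex of $W_{r-1}(v)$ comes no earlier than $v$ in $\sigma$, each such vertex has at most two neighbours preceding $v$. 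Therefore
\[
\bigl|\reach_r(v)\bigr| \;\le\; 1 + 2\,\bigl|\{\, p \in W_{r-1}(v) : p \text{ has a neighbour before } v \text{ in } \sigma \,\}\bigr| .
\]
The whole statement thus reduces to the following structural lemma: for every vertex $v$, the number of vertices of $W_{r-1}(v)$ with a neighbour before $v$ in $\sigma$ is $O(r)$, uniformly over $G \in \mathcal{G}$. The cleanest route is probably to establish the stronger fact that the connected component of $v$ in $H_v$ is \emph{one-dimensional} — a bounded-degree, path-like (caterpillar-like) graph — so that $|W_{r-1}(v)| = O(r)$ outright. The point is that, unlike grid-like $2$-degenerate graphs whose radius-$r$ balls have $\Theta(r^2)$ vertices and whose coloring numbers are quadratic, in the construction of $\mathcal{G}$ deleting the $\sigma$-prefix $\{w <_\sigma v\}$ should leave $v$ inside a fragment on which the later stages sit only as a bounded-branching, path-like overlay.

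Granting this lemma, the displayed bound gives $|\reach_r(v)| = O(r)$ for every $v$, hence $\col_r(G) = O(r)$ for every $r$, as claimed. It matters that the bound does not accumulate over the $\Theta(\log \log n)$ stages of the construction: recall that trees have $r$-coloring number $2$ for every $r$ in spite of having unbounded depth, and the structural lemma must likewise see each stage contributing only a bounded, essentially acyclic amount downward, rather than a contribution that compounds from stage to stage.

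The main obstacle is exactly this structural lemma, and within it the control of the branching — equivalently, the dimension — of the upward fragments. The gadgets used in $\mathcal{G}$ to kill long induced paths introduce, at every stage, many short connections between previously distant vertices; if from some vertex $v$ these produced a radius-$r$ region reaching many distinct vertices below $v$ (for instance if the upward fragment of $v$ branched like a bounded-degree tree, or spread like a grid), then $|\reach_r(v)|$ would grow polynomially or exponentially in $r$, and one would only recover linearly bounded expansion, not linearly bounded coloring numbers. So the crux is to pin down the order $\sigma$ and then verify, stage by stage — most plausibly by an induction parallel to the one bounding the lengths of induced paths — that read in this order the hierarchy of stages is genuinely thin: each stage overlays only a bounded-degree, path-like structure onto the previous graph, so that exploring a ball of radius $r$ upward from any vertex meets only $O(r)$ vertices that can see the past.
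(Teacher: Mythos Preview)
Your proposal reduces everything to a ``structural lemma'' that you leave unproved, and that lemma is false for $G_\ell$. Setting aside that your description of $\sigma$ (backbone path plus staged auxiliary vertices) does not match the construction of Section~\ref{sec:blozup} --- $G_\ell$ is Hamiltonian and is built in one shot as a tree blow-up plus ribs, not in stages --- take any order $\sigma$ refining the ancestor relation $\prec$ on $B_\ell$ (this is what the paper uses) and let $v$ lie on the top edge of $K^s$ for a source $s$ of rank $a<\ell$. Then $v$ is rib-adjacent to two vertices of $\pred(t)$ for each of the $2^{h(a)-1}$ leaves $t$ of $B_\ell(s)$, all of which lie after $v$; so already $|W_1(v)|\ge 2^{h(a)}$ and the upward component of $v$ is nowhere near path-like. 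Worse, from each such rib-neighbour one reaches in $O(1)$ further steps (staying at depths $>\depth(s)$, hence after $v$) a vertex $p\in\pred(t')$ whose depth is the right endpoint of the rank-$(a{+}1)$ interval containing $\depth(s)$, and every such $p$ has a rib to the top edge of the rank-$(a{+}1)$ source --- a strict ancestor of $s$, hence before $v$. Distinct leaves $t$ yield distinct such $t'$, so for a fixed constant $r_0$ there are at least $2^{h(a)-1}$ vertices in $W_{r_0-1}(v)$ with a neighbour before $v$; this is unbounded over $\ell$, and your displayed inequality yields nothing linear in~$r$.

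What your reduction discards is that these exponentially many $p$ all point to the \emph{same} two vertices. The paper's proof counts the $r$-reachable vertices $y$ directly, splitting on the last edge of a witnessing path: if it is a tree edge, locality of the blow-up confines $y$ to at most $8$ candidates; if it is a rib, then $y$ lies on the top edge of an ancestor source of $\pi(v)$, and the non-crossing of $\IS_\ell$ (Remark~\ref{rem:nestint}) forces the right endpoints of the relevant intervals to be nested around $\depth(v)$ and to be traversed one at a time by any witnessing path, so at most $r$ ancestor sources --- hence at most $2r$ such $y$ --- are reachable. This nested-interval argument is the entire content of the bound and has no analogue in your sketch.
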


\paragraph{Organization of the paper.} In the next section we introduce the basic terminology used in this paper. The construction is described in Section~\ref{sec:blozup}. Section~\ref{sec:proprigel} is devoted to the proofs of general properties on the graphs we construct, while in Section~\ref{sec:riiibs} we discuss the behavior of their induced paths. We finally prove Theorem~\ref{th:main} in Section~\ref{sec:smallip}. 
In Section~\ref{sec:colnum} we prove Theorem~\ref{th:linecol}. Directions for future research are given in Section~\ref{sec:ohpeine}.

\section{Preliminaries}\label{sec:prel}

Unless stated otherwise we use standard graph theory terminology.

\paragraph{Trees.}
Our construction is defined starting from trees. 
To avoid ambiguity we use the synonym \emph{node} when referring to the vertices of a tree.
A \emph{rooted tree} is a tree with a distinguished node called its \emph{root}.
The \emph{leaves} are the nodes with degree one and different from the root. The other vertices, that are neither the root nor leaves, are called \emph{internal nodes}.
For every pair $s, t$ of nodes of a rooted tree we write $s \preceq t$ if $s$ lies on the unique path connecting $t$ to the root, and $s \prec t$ if in addition $s\neq t$.
We say that $t$ is a \emph{descendant} of $s$ and that $s$ is an \emph{ancestor} of $t$ whenever $s \preceq t$; $t$ is a \emph{child} of $s$ and $s$ is the \emph{parent} of $t$ if in addition $t$ and $s$ are neighbors.

The \emph{depth} of a node $s$ in a rooted tree is the order of the unique path from $s$ to the root, denoted $\depth(s)$; note in particular that the root has depth $1$.

For every $p \in \N$, the \emph{complete binary tree} $T$ of depth $p$ is the rooted tree defined as follows:
\begin{itemize}
    \item if $p=1$ then $T=K_1$, rooted at its unique vertex;
    \item otherwise $p>1$ and $T$ can be obtained from two disjoint copies of the complete binary tree of depth $p-1$ by adding a new vertex $v$ adjacent to their roots and rooting $T$ at $v$.
\end{itemize}
 
\begin{remark}\label{rem:bintree}
    The complete binary tree of depth $p$ has order $2^p -1$.
\end{remark}

\section{The construction}\label{sec:blozup}

We shall describe, for arbitrarily large $n$, the construction of a 2-degenerate graph with a path of order $n$---which will in fact be Hamiltonian---but no induced path of order $c(\log \log n)^2$ for some constant~$c$.

\paragraph{Tree blow-up.}
The \emph{blow-up} $H$ of a complete binary tree $T$ is the graph obtained from $T$ as follows (see also Figure~\ref{fig:bu}):
\begin{itemize}
    \item for every node $s$ of $T$ we create a clique $K^s$ on $3$ vertices and choose an injection $\mu_s \colon N_{T}(s) \to E(K^s)$; 
    \item for every edge $st$ in $T$ with $s\preceq t$ we add to $H$ a first path $L(s,t)=u_1x_1y_1v_1$ on four vertices connecting an endpoint $u_1$ of $\mu_s(t)$ to an endpoint $v_1$ of $\mu_t(s)$, and a second path $R(s,t)=u_2x_2y_2v_2$ on four vertices connecting the other endpoint $u_2$ of $\mu_s(t)$ to the other endpoint $v_2$ of $\mu_t(s)$; we stress the fact that the endpoints of $\mu_s(t)$ and $\mu_t(s)$ are chosen arbitrarily, and that vertices $x_1,x_2,y_1,y_2$ are added during the construction.
\end{itemize}

\begin{figure}
    \centering
    \includegraphics[scale=1.1]{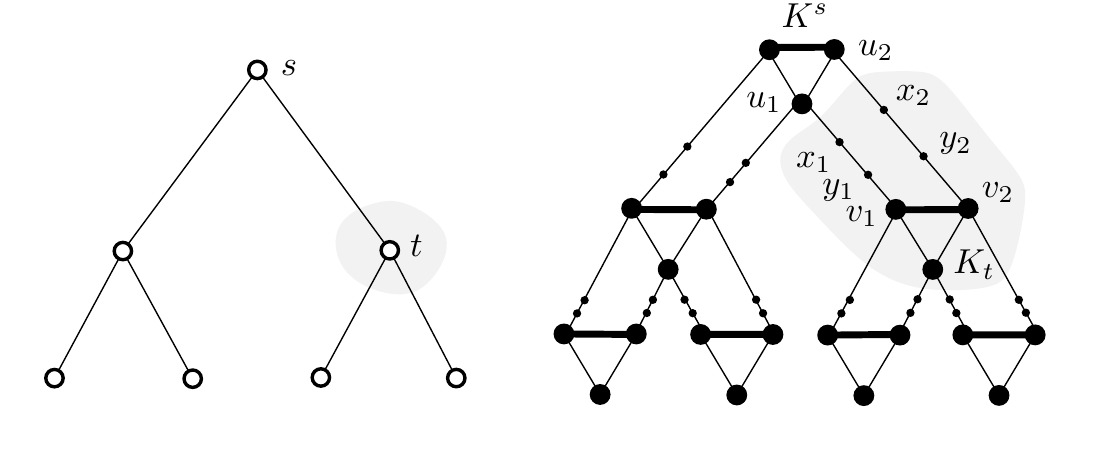}
    \caption{A complete binary tree of height 3 (left) and its blow-up (right) where large vertices represent cliques vertices, small ones represent subdivision vertices, and the bold edges represent top edges. In the gray area, the bag of $t$, i.e., the set of vertices that originated from $t$ in the construction of the blow-up.}
    \label{fig:bu}
\end{figure}

In the definition above we call \emph{top-predecessors} of the node $t$ the vertices in the set $\tpred(t)=\{x_1,x_2\}$, \emph{bottom-predecessors} of $t$ the vertices in  $\bpred(t)=\{y_1,y_2\}$, and shall note $\pred(t)$ the union of these two sets.
We call \emph{bag} of node $t$ the set $V(K^t)\cup \pred(t)$.
The edges (respectively vertices) of $H$ that belong to $K^s$ for some node $s \in V(T)$ are called \emph{clique edges} (respectively \emph{clique vertices}), while the others are called \emph{tree edges} of $H$ (respectively \emph{subdivision vertices}).\footnote{Subdivision vertices are used later in our construction in order to achieve low degeneracy. It seems possible to avoid them if we only aim for a 4-degenerate graph.}
For a node $t \in V(T)$ with parent $s$, the \emph{top edge} of the clique $K^t$ in $H$ is the edge $\mu_t(s)$.
The \emph{top edge} of the root $r$ of $T$ is the only edge of $K^r$ that has no preimage by $\mu_r$, and it shall also be referred to as the \emph{root edge} of $H$.

\paragraph{Nested intervals systems.}
Let $h\colon \N_{\geq 1} \to \N$ be the function defined for every $\ell \in \N_{\geq 1}$ by the following recurrence relation:
\begin{equation}\label{eq:rec}
\left \{%
\begin{array}{l}
    h(1) = 3\\
    h(\ell) = 2 + 2 \cdot  h(\ell-1)\quad \text{for every}\ \ell>1.
\end{array}
\right .
\end{equation}
Observe that $h(\ell)$ may as well be computed by the following explicit formula for every $\ell \in \N_{\geq 1}$:
\begin{equation}\label{eq:exp}
h(\ell)=5\cdot 2^{\ell-1} - 2.
\end{equation}

If $X$ is a set of pairs of integers and $i \in \N$, we denote by $X\!\oplus i$ the set $\{(x+i, x'+i) : (x,x') \in X\}$.
For every $\ell \in \N_{\geq 1}$ the set $\IS_\ell$ is recursively defined as follows:
\[
\left \{
\begin{array}{ll}
     \IS_1 = \{(1,3)\},\ \text{and}&  \\
     \IS_\ell = \{(1, h(\ell))\}\ \cup\ (\IS_{\ell-1}\oplus 1)\ \cup\ (\IS_{\ell-1} \oplus (h(\ell-1) +1))& \text{if}\ \ell>1.
\end{array}
\right.
\]
The elements of $\IS_\ell$ are called \emph{intervals}.
Intuitively $\IS_\ell$ is obtained by taking two copies of $\IS_{\ell-1}$ (appropriately shifted so that they start after integer 1 and do not intersect) and adding a new interval $(1,h(\ell))$ containing the two copies. See Figure~\ref{fig:intervals} for an illustration.

\begin{figure}[H]
    \centering
    \includegraphics[scale=1.1]{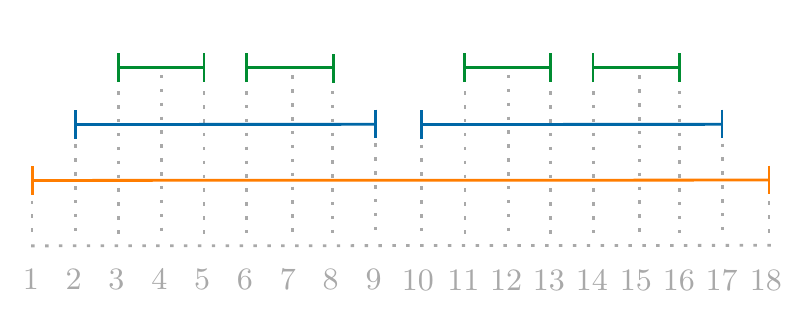}
    \caption{The intervals of $\IS_3$ with intervals of rank $1,\ 2$, and $3$ depicted from top to bottom in green, blue, and orange, respectively.}\label{fig:intervals}
\end{figure}
The following easy properties of $\IS_\ell$ can be proved by a straightforward induction:

\begin{remark}\label{rem:nestint}
For every $\ell\in \N_{\geq 1}$ the following holds:
\begin{enumerate}
    \item the endpoints of the intervals in $\IS_\ell$ range from $1$ to $h(\ell)$;
    \item the intervals of $\IS_\ell$ all have distinct endpoints;
    \item every interval of $\IS_\ell$ is of the form $(i, i+h(a)-1)$ for some $i \in \intv{1}{h(\ell)}$ and $a \in \intv{1}{\ell}$;
    \item for every interval $(i,j)$ in $\IS_\ell$ there is no other interval $(i', j')$ in $\IS_\ell$ such that $i<i'<j<j'$ (informally, intervals do not cross).
\end{enumerate}
\end{remark}

We call \emph{rank} of an interval $(i,j)\in \IS_\ell$ the aforementioned integer $a \in \intv{1}{\ell}$ such that $j = i+h(a)-1$.

\begin{figure}[H]
    \centering
    \includegraphics[scale=1.1]{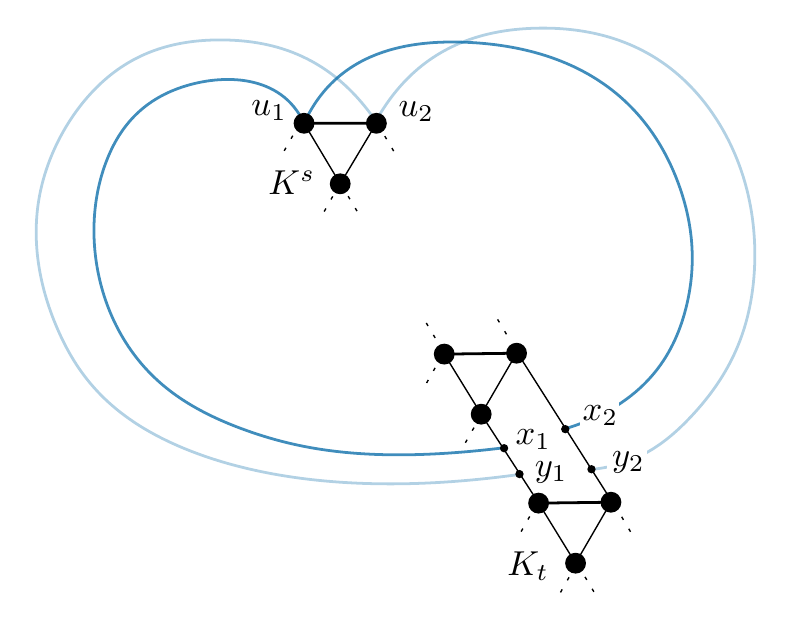}
    \caption{The edges in the set $\ribs_\ell(s,t)$ for two nodes $s,t\in V(B_\ell)$ such that $s \prec t$.}\label{fig:ribs}
\end{figure}

\paragraph{Ribs.}
For every $\ell \in \N_{\geq 1}$ we denote by $B_\ell$ the complete binary tree of depth $h(\ell)$ and by $H_\ell$ its blow-up.
The next construction is illustrated in Figure~\ref{fig:ribs}.
Let $\ribs_\ell$ be the function defined on every pair $(s,t) \in V(B_\ell)^2$ of nodes such that $s \prec t$ as the following set of edges over $V(H_\ell)$:\footnote{Note that these edges do not exist in $H_\ell$. We define this set in order to later construct a graph by adding these edges to $H_\ell$.}
\[
  \ribs_\ell(s,t) = 
  \{u_1x_1,
  u_1x_2,
  u_2y_1,
  u_2y_2\},
\]%
where $\{x_1, x_2\} = \tpred(t)$, $\{y_1, y_2\} = \bpred(t)$, and $u_1u_2$ is the top edge of $K^s$.
Note that such a function is not unique,\footnote{It depends on which vertex in $\tpred(t)$ we decided to call $x_1$ in the construction of the blow-up, and similarly for $\bpred(t)$ and the top edge of $s$.} however any will suffice for our purpose; see Figure~\ref{fig:ribs} for an example.
The graph $G_\ell$ is obtained from $H_\ell$ after the addition of the set of edges $\ribs_\ell(s,t)$ for every pair of nodes $s,t\in V(B_\ell)$ of respective depth $i,j$ such that $s\prec t$ and $(i,j)\in \IS_\ell$.
We call an edge $uv$ in that set a \emph{rib}.
Hence the edges of $G_\ell$ are partitioned into tree edges, clique edges, and ribs; see Figure~\ref{fig:ribbed} for a representation of $G_\ell$.

\begin{figure}[H]
    \centering
    \includegraphics[scale=1.1]{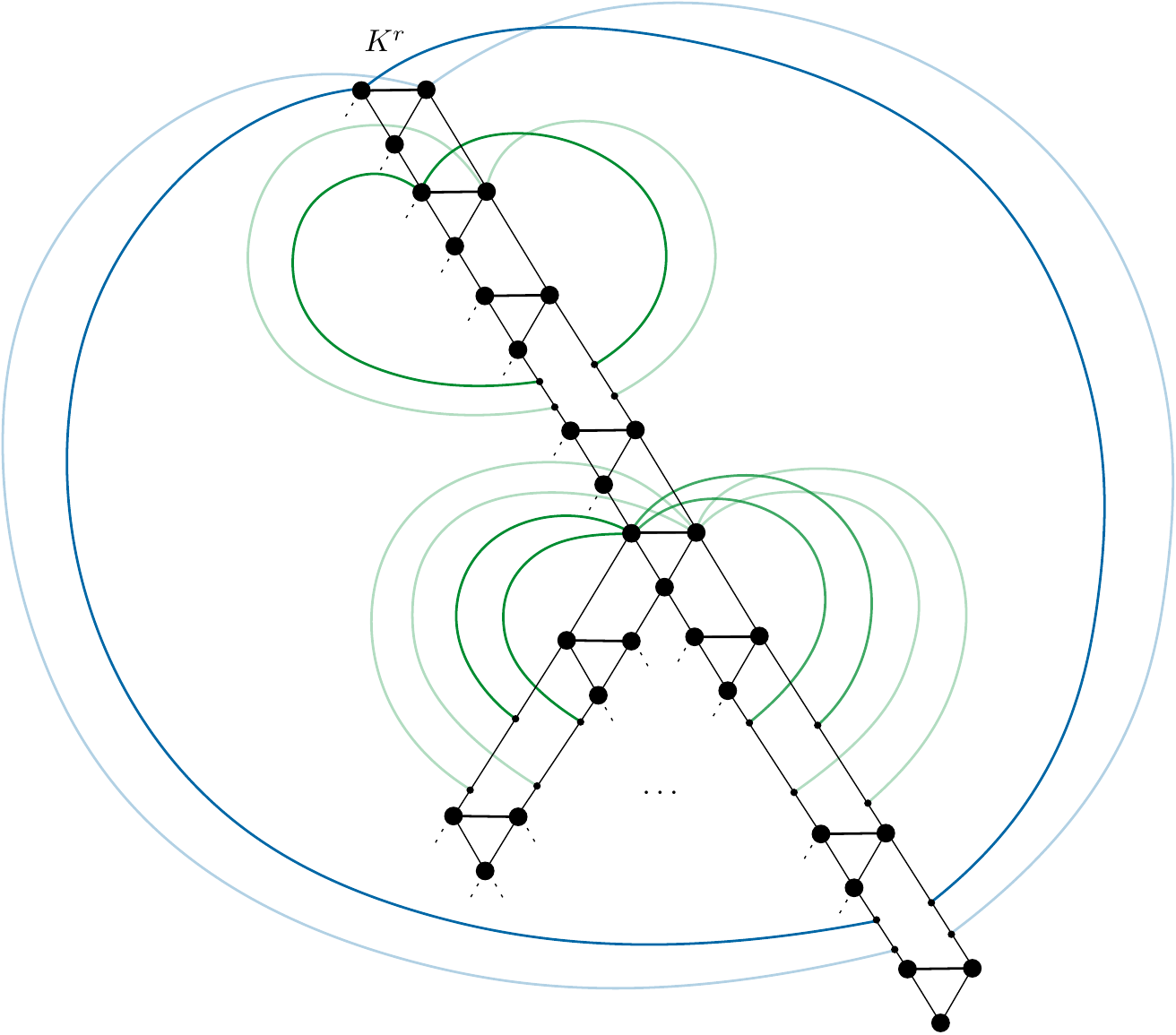}
    \caption{A partial representation of the graph $G_2$ with root clique $K^r$. 
    For readability, subdivision vertices that are not incident to ribs are omitted. Ribs corresponding to intervals of rank $1$ and $2$ are depicted in green and blue, respectively. 
    }\label{fig:ribbed}
\end{figure}

Because much of the structure of $G_\ell$ is inherited from $B_\ell$, some properties of vertices of $G_\ell$ will be conveniently defined via this tree. For this purpose we define $\pi \colon V(G_{\ell}) \to V(B_{\ell})$ as the function mapping every clique vertex $u$ of $G_{\ell}$ to the corresponding node $s$ in $B_{\ell}$ (i.e., the unique node $s$ such that $u \in V(K^s)$) and every subdivision vertex $v$ of $G_{\ell}$ to the unique node $t$ in $B_{\ell}$ such that $v \in \pred(t)$. We define the depth $\depth(u)$ of a vertex $u\in V(G_\ell)$ as the depth of its corresponding node $\pi(u)$ in $B_\ell$.
The following stems from the definition of a blow-up.
\begin{remark}\label{rem:preimage-partition}
    The set $\{\pi^{-1}(t) : t\in V(B_\ell)\}$ defines a partition of $V(G_\ell)$ with $|\pi^{-1}(t)|=3$ when $t$ is the root of $B_\ell$, and $|\pi^{-1}(t)|=7$ otherwise.
\end{remark}

Note that $\pi^{-1}(t)$ is exactly denotes the bag of $t$; see Figure~\ref{fig:bu} for an example.

\section{The properties of \texorpdfstring{$G_\ell$}{Gl}}
\label{sec:proprigel}

A \emph{Hamiltonian path} in a graph is a path that visits all its vertices.

\begin{lemma}\label{lem:ham}
    For every integer $\ell \geq 1$, the graph $G_\ell$ has a Hamiltonian path.
\end{lemma}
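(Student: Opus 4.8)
The plan is to prove the stronger statement by induction on $\ell$ (or more naturally, by induction on the structure of the complete binary tree $B_\ell$), exhibiting a Hamiltonian path of $G_\ell$ whose endpoints are located at the root clique $K^r$. In fact, since $G_\ell$ is built from the blow-up $H_\ell$ by \emph{adding} edges (the ribs), it suffices to show that $H_\ell$ already has a Hamiltonian path with prescribed endpoints, and then the ribs are simply unused. So the real content is: the blow-up $H$ of any complete binary tree $T$ has a Hamiltonian path, and moreover we can ask its two endpoints to be the two endpoints of the top (root) edge of $K^r$. I would set up the induction so that the inductive hypothesis is precisely this strengthened form, since a bare "has a Hamiltonian path" will not glue together across the recursive step.

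The key steps, in order. First, handle the base case: when $T=K_1$, the blow-up $H$ is just the triangle $K^r$ on three vertices with no tree edges, and any two of its vertices are joined by a Hamiltonian path of order $3$; pick the two endpoints of the root edge. Second, the inductive step: write $T$ as a new root $v$ adjacent to the roots $s_1,s_2$ of two copies $T_1,T_2$ of the complete binary tree of depth $p-1$. By induction each blow-up $H(T_i)$ has a Hamiltonian path $P_i$ whose endpoints are the two endpoints $a_i,b_i$ of the top edge $\mu_{s_i}(v)$ of $K^{s_i}$. Now I need to stitch $P_1$ and $P_2$ together through the bag of $v$, which consists of the triangle $K^v$ together with the four subdivision vertices on the two paths $L(v,s_i)=u x y v'$ and $R(v,s_i)$ connecting endpoints of $\mu_v(s_i)$ to endpoints of $\mu_{s_i}(v)$, for $i=1,2$. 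Concretely: start at one endpoint of the root edge $\mu_r$ (the unique edge of $K^v$ with no preimage), traverse the triangle $K^v$ picking up its third vertex, then for $i=1$ go along $L(v,s_1)$ (four subdivision vertices, reaching $a_1$), run through $P_1$ to $b_1$, come back along $R(v,s_1)$ to $K^v$ — but one must be careful here that $K^v$ has only three vertices and each is incident to at most the two edges $\mu_v(s_1),\mu_v(s_2)$ plus the root edge, so the routing of which triangle vertex connects to which $L/R$ path has to be chosen consistently with the injection $\mu_v$. The cleanest bookkeeping: the three vertices of $K^v$ are the two endpoints of $\mu_v(s_1)$ and one extra; but $\mu_v(s_2)$ is a different edge of the same triangle, so the two edges share a vertex. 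I would label the three clique vertices of $K^v$ as $w_1$ (shared by $\mu_v(s_1)$ and $\mu_v(s_2)$), $w_0$ (the other endpoint of $\mu_v(s_1)$), $w_2$ (the other endpoint of $\mu_v(s_2)$). The root edge $\mu_r$ is then $w_0w_2$. Then the Hamiltonian path of $H(T)$ is: $w_0 \to$ (along $L(v,s_1)$) $\to a_1 \to [P_1] \to b_1 \to$ (along $R(v,s_1)$) $\to w_1 \to$ (along $L(v,s_2)$) $\to a_2 \to [P_2] \to b_2 \to$ (along $R(v,s_2)$) $\to w_2$, and it begins and ends at $w_0,w_2$, the endpoints of $\mu_r$. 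One checks every vertex of the bag of $v$ and of $H(T_1),H(T_2)$ is visited exactly once, completing the induction. Third, conclude: taking $T=B_\ell$ and discarding the assertion about endpoints gives that $G_\ell \supseteq H_\ell$ has a Hamiltonian path.

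The main obstacle — really the only subtlety — is the combinatorial bookkeeping at the new root: making sure that the two paths $L(v,s_i),R(v,s_i)$ attach to the correct endpoints of $\mu_v(s_i)$ and $\mu_{s_i}(v)$, that the triangle $K^v$ is traversed in a way compatible with these attachments (the shared vertex $w_1$ must be the "turnaround" between the two subtrees), and that the two leftover endpoints $w_0,w_2$ are exactly the endpoints of the root edge so the strengthened inductive hypothesis is maintained. Since the injections $\mu_s$ are chosen arbitrarily in the construction, the proof should be phrased so that the stated Hamiltonian path works for any valid choice; the labelling above makes this transparent. Everything else — that $L$ and $R$ are paths on four vertices with the claimed endpoints, that the bags partition $V(H)$, that $|V(K^s)|=3$ — is immediate from the definition of the blow-up and from Remark~\ref{rem:preimage-partition}.
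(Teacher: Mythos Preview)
Your proposal is correct and follows essentially the same approach as the paper: reduce to the blow-up $H_\ell$ (since ribs only add edges), strengthen the statement to ``the blow-up of a depth-$p$ complete binary tree has a Hamiltonian path between the two endpoints of its root edge'', and prove this by induction on $p$, stitching the two inductive Hamiltonian paths through the root triangle via the connecting $L/R$ paths. The paper's write-up is terser (it labels the root triangle $z_1z_2z_3$ with root edge $z_1z_2$ and describes the path $z_1 \to F \to z_3 \to F' \to z_2$), but the argument and the key idea---carrying the endpoint condition through the induction so the pieces glue---are the same.
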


\begin{proof}
    In this proof we denote by $F_\ell$ the blow-up of the complete binary tree of depth~$\ell$, for every $\ell \in \N_{\geq 1}$.
    We will show the following.

    \begin{quote}
        \it For every integer $\ell\geq 1$, $F_\ell$ has a Hamiltonian path linking the two endpoints of its root edge.
    \end{quote}

    Note that the above immediately implies the desired statement as $H_\ell = F_{h(\ell)}$ is a spanning subgraph of $G_\ell$.
    The proof is by induction on $\ell$. 
    In the case $\ell=1$, $F_\ell$ is the triangle and the statement clearly holds.

    Let us assume from now on that $\ell>1$ and that the property holds for $\ell-1$.
    It follows from the definition that $F_\ell$ can be constructed from a triangle $z_1z_2z_3$ (where $z_1z_2$ will be the root) and two disjoint copies $F,F'$ of $F_{\ell-1}$ of respective root edges $xy$ and $x'y'$ by adding paths $P(u,v)$  joining $u$ to $v$ for every $(u,v) \in \{(z_1, x), (y, z_3), (z_3, x'), (y', z_2)\}$. Then a Hamiltonian path $P$ of $F_\ell$ from $z_1$ to $z_2$ can be constructed as follows: from $z_1$ we follow $P(z_1, x)$, then the Hamiltonian path of $F$ between $x$ and $y$ (given by the induction hypothesis) leads us to $y$ and we reach $z_3$ via $P(y, z_3)$. The second half of the path through $F'$ up to $z_2$ is symmetric. This proves the above statement, and hence the lemma.
\end{proof}

\begin{lemma}\label{lem:taillede}
For every integer $\ell\geq 1$,    $|V(G_\ell)| \geq 2^{2^{\ell+1}}$.
\end{lemma}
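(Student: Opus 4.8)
The goal is to lower-bound $|V(G_\ell)|$. Since $G_\ell$ has the same vertex set as $H_\ell$ (ribs only add edges), it suffices to count vertices of $H_\ell$, the blow-up of the complete binary tree $B_\ell$ of depth $h(\ell)$. By Remark~\ref{rem:preimage-partition}, the bags $\pi^{-1}(t)$ partition $V(G_\ell)$, with $|\pi^{-1}(t)|=3$ for the root and $7$ otherwise; so
\[
|V(G_\ell)| = 3 + 7\bigl(|V(B_\ell)| - 1\bigr) \geq |V(B_\ell)|.
\]
By Remark~\ref{rem:bintree}, $|V(B_\ell)| = 2^{h(\ell)} - 1$, and by the explicit formula~\eqref{eq:exp}, $h(\ell) = 5\cdot 2^{\ell-1} - 2$.

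**Key step.** It remains to check the numerical inequality $2^{h(\ell)} - 1 \geq 2^{2^{\ell+1}}$, i.e. essentially $h(\ell) \geq 2^{\ell+1}$ (with a little care about the $-1$). First I would verify $5\cdot 2^{\ell-1} - 2 \geq 2^{\ell+1} = 4\cdot 2^{\ell-1}$, which rearranges to $2^{\ell-1} \geq 2$, i.e. $\ell \geq 2$. For $\ell = 1$ one checks the base case directly: $|V(G_1)| = 3 + 7(2^{3}-2) = 3 + 7\cdot 6 = 45 \geq 2^{4} = 16$. For $\ell \geq 2$ we get $h(\ell) \geq 2^{\ell+1} + 1$, hence $2^{h(\ell)} - 1 \geq 2^{2^{\ell+1}+1} - 1 \geq 2^{2^{\ell+1}}$, and combining with the bag count gives the claim.

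**Main obstacle.** There is essentially no obstacle here — this is a routine computation. The only mild annoyances are (i) correctly tracking the $-1$ in $|V(B_\ell)| = 2^{h(\ell)}-1$ so that the final inequality is not off by a factor, which is why getting $h(\ell) \geq 2^{\ell+1}+1$ rather than just $h(\ell) \geq 2^{\ell+1}$ for $\ell\geq 2$ is convenient, and (ii) handling $\ell=1$ separately since the inequality $h(\ell)\geq 2^{\ell+1}$ is tight there ($h(1)=3$, $2^2=4$, so in fact $h(1) < 2^{\ell+1}$) and one must fall back on the factor $7$ from the bag sizes. I would present the base case $\ell = 1$ explicitly and then the inductive/explicit-formula argument for $\ell \geq 2$.
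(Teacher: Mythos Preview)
Your approach is the same as the paper's: use the bag partition from Remark~\ref{rem:preimage-partition} to get $|V(G_\ell)| = 7\,|V(B_\ell)|-4 = 7\cdot 2^{h(\ell)} - 11$, plug in $h(\ell)=5\cdot 2^{\ell-1}-2$, and compare with $2^{2^{\ell+1}}$.

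However, there is a small but genuine arithmetic slip that breaks your argument at $\ell=2$. From $5\cdot 2^{\ell-1}-2 \geq 4\cdot 2^{\ell-1}$ you correctly get $h(\ell)\geq 2^{\ell+1}$ for $\ell\geq 2$, but you then upgrade this to $h(\ell)\geq 2^{\ell+1}+1$, which is false at $\ell=2$: there $h(2)=8=2^{3}$ exactly. Combined with your earlier (overly generous) simplification $|V(G_\ell)|\geq |V(B_\ell)| = 2^{h(\ell)}-1$, the final inequality becomes $2^{8}-1 = 255 \geq 256 = 2^{2^{3}}$, which fails. So as written the proof does not cover $\ell=2$.

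The fix is easy and is precisely what the paper does: do not discard the factor $7$. Keep $|V(G_\ell)| = 7\cdot 2^{h(\ell)} - 11$ and check $7\cdot 2^{5\cdot 2^{\ell-1}-2} - 11 \geq 2^{4\cdot 2^{\ell-1}}$ directly; for $\ell\geq 2$ one has $5\cdot 2^{\ell-1}-2\geq 4\cdot 2^{\ell-1}$, hence $7\cdot 2^{h(\ell)} - 2^{2^{\ell+1}} \geq 6\cdot 2^{2^{\ell+1}} \geq 11$, and $\ell=1$ is the direct check $45\geq 16$. Alternatively, keep your weaker bound but also treat $\ell=2$ by hand (the actual count is $1781\geq 256$), since your inequality $h(\ell)\geq 2^{\ell+1}+1$ does hold for all $\ell\geq 3$.
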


\begin{proof}
    Let $r$ denote the root of $B_\ell$. As observed in Remark~\ref{rem:preimage-partition}, 
    the vertex set of $G_\ell$ can be partitioned into sets $\pi^{-1}(s)$ of size either 3 if $s$ is the root, or 7 otherwise. Hence
    \begin{align*}
    |V(G_\ell)| &= 7\cdot|B_\ell| - 4\\
    &=7\cdot 2^{5\cdot 2^{\ell-1}-2}-11 &\text{(using Remark\ \ref{rem:bintree} and~\eqref{eq:exp})}\\
&\geq 2^{2^{\ell+1}}.
    \end{align*}
\end{proof}

\begin{lemma}\label{lem:2deg}
    For every integer $\ell\geq 1$,
    the graph $G_\ell$ is 2-degenerate.
\end{lemma}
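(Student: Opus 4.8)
The plan is to prove the equivalent statement that every non-empty induced subgraph of $G_\ell$ has a vertex of degree at most $2$ — this is exactly $2$-degeneracy. The first step is to record how the bags $\pi^{-1}(t)$, $t\in V(B_\ell)$, are glued together: clique edges stay inside a single bag; every tree (path) edge either stays inside a bag or joins a bag to its parent bag; and every rib joins $\pi^{-1}(a)$ to $\pi^{-1}(b)$ for a pair $a \prec b$ of $B_\ell$. Hence the only edges leaving a bag $\pi^{-1}(t)$ run to the parent bag, to ancestor bags (ribs with $t$ as lower node), to child bags, or to descendant bags (ribs with $t$ as upper node). I would also note, using Remark~\ref{rem:nestint}, that every node $t$ has at most one ancestor $s$ with $(\depth(s),\depth(t)) \in \IS_\ell$, so each subdivision vertex of $\pred(t)$ is incident to at most one rib.

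Given a non-empty $S \subseteq V(G_\ell)$, I would let $t$ be a node of $B_\ell$ of maximum depth with $\pi^{-1}(t) \cap S \ne \emptyset$. Then no bag strictly below $t$ meets $S$, so by the observation above every vertex of $\pi^{-1}(t) \cap S$ has, in $G_\ell[S]$, only neighbours inside $\pi^{-1}(t)$ or in shallower bags. It remains to produce a vertex of $\pi^{-1}(t) \cap S$ of degree at most $2$ in $G_\ell[S]$. If $t$ is the root then $S \subseteq V(K^r)$ and $G_\ell[S]$ is a subgraph of a triangle, so any vertex works. Otherwise I would peel the bag from the outside in, using the following chain of implications (valid because the bags strictly below $t$ are absent from $S$): the ``third'' vertex of a leaf clique, and the clique vertex of an internal clique not on the top edge, have only their two clique-neighbours left, so if such a vertex lies in $S$ we are done; otherwise each clique vertex on the top edge has at most two surviving neighbours; if none of $V(K^t)$ lies in $S$, then each bottom-predecessor has at most two surviving neighbours (its path-neighbour in $K^t$ being gone, and it carries at most one rib); and if no bottom-predecessor lies in $S$ either, then each top-predecessor has at most two surviving neighbours (its path-neighbour being a now-absent bottom-predecessor). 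Since $\pi^{-1}(t)\cap S\ne\emptyset$, this chain always yields the desired vertex.

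The bulk of the work — and the only delicate point — is the degree bookkeeping: for each of the seven vertex types of a bag one must list exactly its neighbours in $G_\ell$ (clique edges; the at most two path edges created in the blow-up; the rib edges) and determine which survive in $G_\ell[S]$. Two facts are what bring the count down to $2$: the clique vertices of $K^t$ have all of their downward path edges and all of their rib edges going to strictly deeper bags, and each subdivision vertex sits on at most one rib. Making the ``no edge leaves a bag going downward'' claim fully explicit by a short case check on edge types, and then carrying out this seven-case degree computation, is the whole proof; I do not expect any further difficulty.
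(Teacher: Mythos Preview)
Your proposal is correct and follows essentially the same approach as the paper's own proof: pick the deepest bag meeting the subgraph, then run through the four vertex types in that bag (non-top clique vertex, top-edge clique vertex, bottom-predecessor, top-predecessor) in exactly that order, showing that the first type present has degree at most~$2$ once all strictly deeper bags are absent. The paper phrases this as a partition $D_1,\dots,D_4$ of all maximum-depth vertices rather than restricting to a single bag, but the case analysis and degree bookkeeping are the same, including the use of Remark~\ref{rem:nestint} to ensure each subdivision vertex carries at most one rib.
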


\begin{proof}
    Let $G$ be a (non-empty) induced subgraph of $G_\ell$ and let $D$ denote the set of vertices of maximal depth $d$ in $G$.
    We define $4$ subsets of $D$ as follows:
    \begin{itemize}
        \item $D_1$ is the set of clique vertices in $D$ that are not incident with the top edge of their clique;
        \item $D_2$ is the set of clique vertices in $D$ that are incident with the top edge of their clique;
        \item $D_3$ is the set of bottom-predecessors in $D$;
        \item $D_4$ is the set of top-predecessors in $D$.
    \end{itemize}
    Clearly this forms a partition of $D$ and one of these sets is not empty. Let $i$ be the minimum index such that $D_i$ is not empty and let $v \in D_i$. Let us show that $\deg_G(v) = 2$. We distinguish cases according to the value of $i$:
    \begin{enumerate}[{Case }${i}=1$:]
        \item In $G_\ell$, 
        the vertex $v$ has $\leq 4$ neighbors: the two other elements of $K^{\pi(v)}$ and at most two vertices $x,y \in \tpred(s)\cup \tpred(t)$ for $s$ and $t$ the children of $\pi(v)$ in $B_\ell$, if any. As $\depth(s) = \depth(t) > d$ and by maximality of $d$, none of $x$ and $y$ belongs to $G$. Hence $v$ has at most two neighbors in this graph.
        \item Recall that if $uu'$ is a rib of $G_\ell$ and $u$ is a clique vertex, then $\depth(u) < \depth(u')$. Therefore, as $v$ is a clique vertex and $d$ is maximum, no rib of $G$ is adjacent to $v$. Besides, at most one other vertex of $K^{\pi(v)}$ belongs to $G$, as otherwise $D_1$ would not be empty. Finally, only one other vertex is adjacent to $v$ in $G_\ell$ (an element of $\bpred(\pi(v))$), so $\deg_{G}(v)\leq 2$.
        \item A vertex $u\in \bpred(s)$ for some $s \in V(B_\ell)$ has the following neighbors in $G_\ell$: a vertex of the top edge of $K^s$, a vertex of $\tpred(s)$ and possibly a third neighbor connected via a rib (see Remark~\ref{rem:nestint}). The first type of neighbor does not exist in $G$ as $D_2 = \emptyset$, so here again $\deg_G(v) \leq 2$.
        \item Similarly as above, a vertex $u\in \tpred(s)$ for some $s \in V(B_\ell)$ has the following neighbors in $G_\ell$: a vertex of $\bpred(s)$, a vertex of $K^t$ (where $t$ denotes the parent of $s$ in $B_\ell$) and possibly a neighbor connected via a rib. The first type of neighbor does not exist in $G$ as $D_3 = \emptyset$, and again $\deg_G(v) \leq 2$.
    \end{enumerate}
We proved that every subgraph of $G$ contains a vertex of degree at most 2. This shows that $G_\ell$ is indeed 2-degenerate.
\end{proof}

\section{Ribs, sources, and their properties}
\label{sec:riiibs}

In the rest of the paper we fix $\ell\in\mathbb{N}_{\geq 1}$.
A node $s$ of $B_\ell$ is a \emph{source} if there is an interval $(i,j) \in \IS_\ell$ such that $s$ has depth~$i$.
Intuitively this means that in $G_\ell$ the clique $K^s$ will send ribs to vertices deeper (i.e., in $\pred(t)$ for the descendants $t$ of $s$ of depth~$j$) in~$G_\ell$.
For a source $s$ of $B_\ell$ the \emph{rank} of $s$ is defined as the rank of $(i,j)$, i.e., the integer $a\in\intv{1}{\ell}$ such that $j - i + 1 = h(a)$. We denote by $B_\ell(s)$ the subtree of $B_\ell$ rooted at $s$ and of depth~$h(a)$. This means that the leaves of $B_\ell(s)$ are exactly those vertices $t$ such that in $G_\ell$ $K_s$ sends ribs to the predecessors of $t$.
The graph $G_\ell(s)$ is defined as the subgraph of $G_\ell$ induced by $\pi^{-1}(V(B_\ell(s)))$.

Let $Q$ be an induced path of $G_\ell$. The following definition is crucial in the rest of the proof:
a source $s$ is said to be \emph{$Q$-special} if $Q$ has two vertices $u,v$ such that $u$ is part of the top edge of $K^s$ and $\pi(v)$ is an internal node of $B_\ell(s)$.
For every vertex $v$ of $G_\ell$, we define $\tau(v)$ as the minimum rank of a source $s$ such that $\pi(v)$ is an internal node of $B_\ell(s)$. Notice that if $s$ is the root or a leaf of $B_\ell$ then $\tau$ is not defined: we set $\tau(v) = \ell+1$ in this case.

\begin{remark}\label{rem:intertau}
    Let $v\in V(G_\ell)$ and let $s$ be a source of $B_\ell$ of rank $a\in \intv{1}{\ell}$.
    \begin{enumerate}
        \item if $\pi(v)$ is an internal node of $B_\ell(s)$ then $\tau(v)\leq a$;
        \item if $\pi(v) = s$ or $\pi(v)$ is a leaf of $B_\ell(s)$, then $\tau(v) = a+1$.
    \end{enumerate}
\end{remark}

In a graph $G$, we say that a set $X\subseteq V(G)$ \emph{separates} two sets $Y,Z\subseteq V(G)$ if every path from a vertex of $Y$ to a vertex of $Z$ intersects $X$.

\begin{lemma}\label{lem:separation}
Let $s$ be a source of $B_\ell$, let $L$ be the set of leaves of $B_\ell(s)$. Then
\[
X_s = V(K^s) \cup \bigcup_{t\in L}\pi^{-1}(t)
\]
separates $V(G_\ell(s))\setminus \pred(s)$ from the other vertices of $G_\ell$.
\end{lemma}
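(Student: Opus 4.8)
The plan is to understand exactly which edges of $G_\ell$ can leave the set $V(G_\ell(s)) \setminus \pred(s)$, and to check that each such edge has an endpoint in $X_s$. Recall that $V(G_\ell(s)) = \pi^{-1}(V(B_\ell(s)))$, so the vertices of $G_\ell(s)$ are exactly those $u$ with $\pi(u) \in V(B_\ell(s))$. First I would classify the edges of $G_\ell$ into the three types given by the construction: tree edges, clique edges, and ribs. For tree edges and clique edges, the key observation is that the bag $\pi^{-1}(t)$ of each node $t$ together with the bags of its parent and children "covers" all such edges: a tree edge or clique edge incident to a vertex $u$ with $\pi(u)=t$ only connects $u$ to another vertex whose image under $\pi$ is $t$, the parent of $t$, or a child of $t$ (this follows from the definition of the blow-up — $L(s,t)$ and $R(s,t)$ only involve vertices in the bags of $s$ and $t$, and $K^t$ lives in the bag of $t$). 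Hence if $u \in V(G_\ell(s))$ with $\pi(u)$ a non-leaf node of $B_\ell(s)$ other than $s$, then every neighbor of $u$ via a tree or clique edge also lies in $V(G_\ell(s))$; the only places a tree/clique edge can escape are at $\pi(u) = s$ (neighbors in the parent of $s$, but those go through $V(K^s) \subseteq X_s$, and the subdivision vertices of $\pred(s)$ are excluded from the set we are separating) or at a leaf $t$ of $B_\ell(s)$ (neighbors in children of $t$, but then $u \in \pi^{-1}(t) \subseteq X_s$).

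Next I would handle the ribs, which is where the combinatorics of $\IS_\ell$ enters. A rib in $\ribs_\ell(s',t')$ joins a vertex $u_1$ or $u_2$ of the top edge of $K^{s'}$ to a predecessor vertex in $\pred(t')$, where $\depth(s') = i$, $\depth(t') = j$, and $(i,j) \in \IS_\ell$. I need to show that if such a rib has exactly one endpoint in $V(G_\ell(s)) \setminus \pred(s)$, then its other endpoint (or possibly it itself) passes through $X_s$. The two endpoints of a rib lie in $\pi^{-1}(s')$ and $\pi^{-1}(t')$ respectively, and since $s' \prec t'$ the node $t'$ is a descendant of $s'$ at depth $j$ while $s'$ is an ancestor of $t'$. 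The crucial point: the pair $(\depth(s'), \depth(t')) = (i,j)$ is an interval of $\IS_\ell$, and $B_\ell(s)$ is the subtree rooted at a node of some depth with its leaves at a prescribed depth; the depths spanned by $B_\ell(s)$ form an interval $(i_0, j_0) \in \IS_\ell$ as well. By the non-crossing property (Remark~\ref{rem:nestint}(4)), the interval $(i,j)$ is either disjoint from $(i_0,j_0)$, or one is nested inside the other. I would do a case analysis: if $(i,j)$ is disjoint from or "above" $(i_0,j_0)$, the rib cannot have an endpoint strictly inside $G_\ell(s) \setminus \pred(s)$ without the branch of $B_\ell$ it uses passing through $s$ — and then $s'$ is an ancestor of $s$, so the rib endpoint at $t' \in V(B_\ell(s))$ forces $t'$ to be a descendant of $s$, and I track where along the $s'$-to-$t'$ branch the set $V(K^s)$ or a leaf bag of $B_\ell(s)$ is met; if $(i,j)$ is nested inside $(i_0,j_0)$, then both $s'$ and $t'$ lie within $B_\ell(s)$, so both endpoints of the rib are in $V(G_\ell(s))$ and it does not escape at all (modulo $s'$ possibly being a leaf of $B_\ell(s)$, which cannot happen since a source has a non-trivial subtree below it).

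Finally I would assemble the cases: any path from $V(G_\ell(s)) \setminus \pred(s)$ to a vertex outside $V(G_\ell(s))$ must at some point traverse an edge with one endpoint in $V(G_\ell(s)) \setminus \pred(s)$ and the other outside, and by the edge analysis above every such edge has an endpoint in $X_s$, giving the claimed separation. The main obstacle I anticipate is the rib case: ribs are exactly the long-range edges that could in principle bypass $X_s$, and making the nesting argument precise requires carefully matching the recursive structure of $\IS_\ell$ (and hence of which depths $B_\ell(s)$ occupies) with the definition of sources, so that whenever a rib "reaches into" $B_\ell(s)$ from outside, it is forced to enter through $V(K^s)$, and whenever it "reaches out" of $B_\ell(s)$, it must exit through one of the leaf bags $\pi^{-1}(t)$, $t \in L$. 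Getting the depth bookkeeping right — in particular confirming that $\pred(s)$ is correctly excluded because its subdivision vertices belong to the bag of $s$ but not to $V(K^s)$ — is the delicate part; the tree/clique edge analysis is essentially immediate from the locality of the blow-up construction.
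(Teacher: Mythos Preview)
Your plan matches the paper's: classify edges by type, use the locality of the blow-up for tree and clique edges, and invoke the laminar structure of $\IS_\ell$ for ribs. The one place you overcomplicate is the rib case. You anticipate having to argue that a rib crossing the boundary of $V(G_\ell(s))$ does so through $X_s$, and you speak of ``tracking where along the $s'$-to-$t'$ branch'' the separator is met; but a rib is a single edge, not a path, and in fact the non-crossing and distinct-endpoints properties (Remark~\ref{rem:nestint}, items 2 and 4) give the cleaner conclusion that \emph{no rib has exactly one endpoint in $V(G_\ell(s))$ at all}. If $(i_0,j_0)$ is the interval defining $B_\ell(s)$ and $(i,j)\in\IS_\ell$ is the interval of a rib, then either $(i,j)=(i_0,j_0)$ or $(i,j)$ is strictly nested inside $(i_0,j_0)$ (both endpoint depths lie in $[i_0,j_0]$, so both endpoints are in $G_\ell(s)$), or $(i,j)$ strictly contains or is disjoint from $(i_0,j_0)$ (both endpoint depths lie outside $[i_0,j_0]$, so neither endpoint is in $G_\ell(s)$). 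Hence every edge leaving $V(G_\ell(s))\setminus\pred(s)$ is a tree edge, and your locality argument for tree edges is exactly what remains---and exactly what the paper does.
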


\begin{proof}
The node $s$ is a source so there is an interval $(i,j)\in \IS_\ell$ such that $\depth(s) = i$ and $\depth(t) = j$ for every leaf $t$ of $B_\ell(s)$.
By Remark~\ref{rem:nestint} there is no other interval $(i',j') \in \IS_\ell$ such that $i\leq i'\leq j\leq j'$ or $i' \leq i\leq j'\leq j$ so in the construction of $G_\ell$ there is no rib with only one endpoint in $G_\ell(s)$. Therefore every edge leaving $V(G_\ell(s))\setminus \pred(s)$ (i.e., with only one endpoint in that set) is a tree edge.
Let $v \in \pi^{-1}(t)$ for some internal node of $B_\ell(s)$.
By construction every vertex that is connected to $v$ via a tree edge is either in the bag of $t$ or in the bag of a neighbor of $t$ (possibly $s$ or a leaf of $B_\ell(s)$).
In particular, $v$ has no neighbor in $(V(G_\ell) \setminus V(G_\ell(s)))\cup \pred(s)$. This shows that a path from $v$ to $(V(G_\ell)\setminus V(G_\ell(s))) \cup \pred(s)$ goes through the set $X_s$, as claimed.
\end{proof}

We remark the following as a consequence of the definition of depth. 

\begin{remark}\label{rem:depth-decreasing}
If $uv$ is an edge of $G_\ell$ such that $\depth(u)< \depth(v)$ then either $uv$ is a tree edge and $v$ a top-predecessor, or it is a rib of source $\pi(u)$.
\end{remark}

\begin{lemma}\label{lem:tau-decreasing}
If $uv$ is an edge of $G_\ell$ such that $\tau(u) > \tau(v)$ then $uv$ is a tree edge and $\tau(u)=\tau(v)+1$.
If in addition $\depth(v)\geq \depth(u)$, then $\pi(u)$ is a source and $u \in V(K^{\pi(u)})$.
\end{lemma}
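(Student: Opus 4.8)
The plan is to analyze the edge $uv$ according to its type (tree edge, clique edge, or rib) and use the structural characterizations in Remark~\ref{rem:nestint} and Remark~\ref{rem:depth-decreasing} together with the definition of $\tau$. The key observation driving everything is that $\tau$ is governed by the nested interval system: by Remark~\ref{rem:intertau}, $\tau(w)$ is the minimum rank of a source $s$ with $\pi(w)$ internal in $B_\ell(s)$, or $\ell+1$ if no such source exists. So I need to understand how, for two endpoints of an edge, these ``containing subtrees'' $B_\ell(s)$ can differ.

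First I would handle the three edge types. If $uv$ is a clique edge, then $\pi(u)=\pi(v)$, so $\tau(u)=\tau(v)$, contradicting $\tau(u)>\tau(v)$; hence $uv$ is not a clique edge. If $uv$ is a rib, say with source $\pi(u)$ (using Remark~\ref{rem:depth-decreasing}, since a rib goes from a clique vertex of smaller depth to a predecessor of larger depth), then $\pi(v)$ is a leaf of $B_\ell(\pi(u))$ and one checks that every source subtree containing $\pi(u)$ as an internal node also contains $\pi(v)$ as an internal node (because the interval of $\pi(u)$'s source is nested strictly inside any larger one, by the non-crossing property of Remark~\ref{rem:nestint}, and passes through $\pi(v)$'s depth), and additionally $\pi(u)$ is itself a source of some rank $a$ with $\pi(v)$ a leaf, giving $\tau(u)\le a < a+1 = \tau(v)$ or at least $\tau(u)\le\tau(v)$ — in any case $\tau(u)\le\tau(v)$, contradiction. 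So $uv$ must be a tree edge. Thus $\pi(u)$ and $\pi(v)$ are parent and child in $B_\ell$, differing in depth by exactly one.

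Next, with $uv$ a tree edge and $\pi(u),\pi(v)$ adjacent in $B_\ell$ (say at depths $d$ and $d'$ with $|d-d'|=1$), I would compare $\tau(u)$ and $\tau(v)$ via the intervals. Let $a=\tau(v)$ and let $s$ be a source of rank $a$ with $\pi(v)$ internal in $B_\ell(s)$. If $\pi(u)$ is a descendant of $\pi(v)$ (i.e. $d' < d$, wait — $\pi(u)$ deeper), then $\pi(u)$ is also internal in $B_\ell(s)$ unless $\pi(u)$ is a leaf of $B_\ell(s)$, i.e. unless the interval of $s$ ``ends'' exactly at $\pi(u)$'s depth; and if $\pi(u)$ is an ancestor of $\pi(v)$, similarly $\pi(u)$ is internal in $B_\ell(s)$ unless $\pi(u)=s$. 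Using the non-crossing / nesting structure of $\IS_\ell$, the ranks of sources whose subtrees contain a given node as an internal node form, in an appropriate sense, a set closed downward past the boundary cases; the upshot is that passing along a single tree edge changes $\tau$ by at most one, and if it decreases it does so by exactly one. This is the step I expect to be the main obstacle: making precise, from Remark~\ref{rem:nestint}, exactly which sources $s$ have $\pi(u)$ versus $\pi(v)$ internal in $B_\ell(s)$, and checking that the minimum rank cannot drop by two. The clean way is: any source $s$ witnessing $\tau(v)=a$ either also witnesses $\tau(u)\le a$ (if $\pi(u)$ is still internal in $B_\ell(s)$), or else $\pi(u)$ is the root or a leaf of $B_\ell(s)$, in which case by Remark~\ref{rem:intertau}(2) we get $\tau(u)=a+1$; no other value is possible.

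Finally, for the additional conclusion, assume $\tau(u)=\tau(v)+1$ and $\depth(v)\ge\depth(u)$. From the previous analysis, the only way $\tau(u)$ exceeds $\tau(v)$ across a tree edge is that $\pi(u)$ is the root or a leaf of $B_\ell(s)$ for the source $s$ of rank $\tau(v)$ witnessing $\pi(v)$'s membership. Since $\pi(v)$ is a child of $\pi(u)$ that is internal in $B_\ell(s)$, $\pi(u)$ cannot be a leaf of $B_\ell(s)$ (a leaf has no children inside the subtree), so $\pi(u)=s$, which is a source — this gives $\pi(u)=s$ is a source. It remains to see $u\in V(K^{\pi(u)})$: the tree edge $uv$ with $\depth(v)\ge\depth(u)$ means, by Remark~\ref{rem:depth-decreasing}, that $v$ is a top-predecessor and hence $u$ is on the top edge of $K^{\pi(v)}$'s... more precisely, a tree edge incident to a vertex of larger-or-equal depth on the child side forces $u$ to be a clique vertex of $\pi(u)$ (the non-clique endpoints of tree edges attach subdivision paths, and tracing the blow-up construction, the endpoint at the shallower node is a clique vertex). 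Hence $u\in V(K^{\pi(u)})$, completing the proof. I would double-check this last micro-step directly against the definition of the blow-up, since it is purely about which endpoint of a tree edge is a clique vertex.
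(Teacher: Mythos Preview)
Your overall strategy—case analysis on the edge type, then using the nested-interval structure to pin down $\tau$—is sound and does lead to the result, but it differs from the paper's proof and contains one local error you should fix.

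The paper does not case on edge types. It takes a source $s$ of rank $a=\tau(v)$ with $\pi(v)$ internal in $B_\ell(s)$, notes that $\pi(u)$ cannot also be internal in $B_\ell(s)$ (else $\tau(u)\le a$), and then invokes Lemma~\ref{lem:separation}: the set $X_s=V(K^s)\cup\bigcup_{t\text{ leaf of }B_\ell(s)}\pi^{-1}(t)$ separates the internal bags from the rest of the graph, so the neighbour $u$ of $v$ lies in $X_s$. Remark~\ref{rem:intertau}(2) then gives $\tau(u)=a+1$ at once, and one observes that every edge from $X_s$ into an internal bag is a tree edge. The additional clause follows in one line: if $\depth(u)\le\depth(v)$ then, among the vertices of $X_s$, only those in $K^s$ can be adjacent to internal bags, so $\pi(u)=s$ and $u\in V(K^s)$. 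This is shorter than your route and reuses Lemma~\ref{lem:separation} rather than re-deriving its content.

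The error in your plan is in the rib case. You write that ``$\pi(u)$ is itself a source of some rank $a$ with $\pi(v)$ a leaf, giving $\tau(u)\le a$''. This is false: by Remark~\ref{rem:intertau}(2), a vertex whose $\pi$-image \emph{equals} a source of rank $a$ has $\tau$-value exactly $a+1$, not at most $a$. Your preceding observation (that every source subtree containing $\pi(u)$ as an internal node also contains $\pi(v)$ as an internal node) is true but only yields $\tau(v)\le\tau(u)$, which does not contradict the hypothesis. The correct, one-line fix: for a rib with source node $s$ of rank $a$, one endpoint has $\pi$-image $s$ and the other a leaf of $B_\ell(s)$, so by Remark~\ref{rem:intertau}(2) both $\tau$-values equal $a+1$, contradicting $\tau(u)>\tau(v)$. (You also tacitly assumed $u$ is on the source side of the rib; the argument is symmetric, but say so.) Separately, when you conclude ``$\pi(u)$ and $\pi(v)$ are parent and child in $B_\ell$'', note that some tree edges have $\pi(u)=\pi(v)$ (e.g.\ between a top- and a bottom-predecessor); that case is excluded here only because $\tau(u)\neq\tau(v)$, and this deserves a word. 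The rest of your tree-edge analysis and the final clause are correct.
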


\begin{proof}
Let $s \in V(B_\ell)$ be a source of minimum rank such that $\pi(v)$ is an internal node of $B_\ell(s)$. Such a node exists as $\tau(v)<\tau(u)\leq \ell+1$ so $\tau(v)\leq \ell$, hence $\pi(v)$ is in particular an internal node of $B_\ell(r)$ where $r$ is the root of $B_\ell$.
By definition the rank of $s$ is $\tau(v)$.

Observe that $\pi(u)$ cannot be an internal node of $B_\ell(s)$ as otherwise by Remark~\ref{rem:intertau} we would have $\tau(u) \leq \tau(v)$. As proved in Lemma~\ref{lem:separation}, $X_s$ (as defined in the statement of that lemma) separates the vertices $w$ such that $\pi(w)$ is an internal node of $B_\ell(s)$ from the rest of the graph, hence $u\in X_s$. By Remark~\ref{rem:intertau}, $\tau(u) = \tau(v)+1$ and we derive that $uv$ is a tree edge.

In the case where $\depth(u) \leq \depth(v)$, from $u\in X_s$ we deduce $\pi(u) = s$ (hence it is a source). Since only vertices of $K^s$ have neighbors in bags of internal nodes of $B_{\ell}(s)$, we furthermore have $u\in V(K^s)$.
\end{proof}

\begin{lemma}\label{lem:depth-root}
Let $Q$ be an induced path of $G_\ell$. 
Then there is a unique node $t\in V(B_\ell)$ of minimum depth subject to $\pi^{-1}(t) \cap V(Q)\neq \emptyset$.
\end{lemma}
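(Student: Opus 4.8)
The plan is to argue that the nodes $t$ with $\pi^{-1}(t)\cap V(Q)\neq\emptyset$, ordered by depth, have a single minimum. Existence of a node of minimum depth is clear since $V(Q)\neq\emptyset$ and depths are positive integers, so the real content is uniqueness: I must rule out the situation where two distinct nodes $t_1\neq t_2$ of the same minimum depth $d$ both meet $Q$. Since $Q$ is connected, there is a subpath of $Q$ joining a vertex of $\pi^{-1}(t_1)$ to a vertex of $\pi^{-1}(t_2)$, and every internal vertex of that subpath lies in some bag $\pi^{-1}(t)$ with $\depth(t)\geq d$ (by minimality of $d$). The idea is to track how depth can change along an edge of $G_\ell$ and derive a contradiction.

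First I would invoke Remark~\ref{rem:depth-decreasing}: along any edge $uv$ with $\depth(u)<\depth(v)$, the deeper endpoint $v$ is either a top-predecessor reached by a tree edge, or $v$ lies in $\pred(t)$ for a leaf $t$ of $B_\ell(\pi(u))$ reached by a rib. In either case $\pi(v)$ is a \emph{descendant} of $\pi(u)$ in $B_\ell$ (for the rib case, the leaves of $B_\ell(\pi(u))$ are descendants of $\pi(u)$ by definition). Hence: if $uv\in E(G_\ell)$ then $\pi(u)$ and $\pi(v)$ are comparable in the tree order $\preceq$ of $B_\ell$ — the shallower one is an ancestor of the deeper one (when depths differ), and when $\depth(u)=\depth(v)$ one checks directly from the construction that either $\pi(u)=\pi(v)$ or they are the two children of a common parent joined through that parent's clique and subdivision vertices — wait, that last case does change depth, so in fact $\depth(u)=\depth(v)$ forces $\pi(u)=\pi(v)$. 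Cleaner statement: for every edge $uv$ of $G_\ell$, the nodes $\pi(u),\pi(v)$ are $\preceq$-comparable. I would verify this by a short case check over the three edge types (clique edges: same bag; tree edges: consecutive bags along a parent–child link; ribs: source to a descendant leaf's bag).

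Now apply this along the subpath $R$ of $Q$ from $\pi^{-1}(t_1)$ to $\pi^{-1}(t_2)$. Let $v_0,v_1,\dots,v_m$ be its vertices, so $\pi(v_0)=t_1$, $\pi(v_m)=t_2$, and $\depth(\pi(v_k))\geq d$ for all $k$ with equality at the endpoints. Consecutive images $\pi(v_k),\pi(v_{k+1})$ are comparable, so the sequence $\bigl(\pi(v_k)\bigr)_k$ is a walk in $B_\ell$ moving between comparable nodes; its minimum-depth entry is attained, say the smallest depth along the walk is $d'\leq d$, attained at some node $w$. Since the walk is connected through comparabilities and $B_\ell$ is a tree, $w$ must be a common ancestor of $t_1$ and $t_2$; as $\depth(t_1)=\depth(t_2)=d$ and $t_1\neq t_2$, any common ancestor has depth strictly less than $d$, so $d'<d$. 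But then some $v_k$ is a vertex of $Q$ with $\depth(v_k)=\depth(w)=d'<d$, contradicting the minimality of $d$ among all nodes whose bag meets $Q$. Hence $t_1=t_2$, proving uniqueness.

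The step I expect to be the main obstacle is making the ``images along the subpath move between comparable nodes, hence their shallowest one is a common ancestor'' argument airtight — in a tree, if $a_0,a_1,\dots,a_m$ is a sequence of nodes with each consecutive pair comparable, one wants that the node of minimum depth in the sequence is an ancestor of all the others, which needs a small induction (the set of ancestors encountered so far forms a path from the root, and comparability with the previous node keeps the walk from ``jumping'' to an incomparable part of the tree). Everything else is a direct appeal to Remark~\ref{rem:depth-decreasing} and the edge-type case analysis, which is routine given the construction in Section~\ref{sec:blozup}.
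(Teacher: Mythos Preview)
Your proposal is correct and follows essentially the same route as the paper's proof: assume two distinct minimum-depth nodes $t_1,t_2$, use that every edge of $G_\ell$ joins vertices whose $\pi$-images are $\preceq$-comparable, and deduce that the subpath of $Q$ between their bags must visit the bag of a strict common ancestor, contradicting minimality. The paper states the comparability fact and the common-ancestor consequence in one sentence each; you spell both out (the edge-type case check and the small induction showing the minimum-depth node along a sequence of pairwise-comparable tree nodes is an ancestor of all of them), which is fine and arguably cleaner.
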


\begin{proof}
Let us assume towards a contradiction that there are two such nodes $t,t'$. As they have the same depth, they are not comparable in $B_\ell$. Besides, observe that every edge of $G_\ell$ connects vertices whose image by $\pi$ is $\preceq$-comparable. Therefore the subpath of $Q$ linking the bag of $t$ to that of $t'$ contains a vertex of the bag of $t''$ for some common ancestor $t''$ of $t$ and $t'$, which contradicts the minimality of the depth of those vertices.
\end{proof}

\begin{lemma}\label{lem:tau-constant}
There is a constant $c_{\ref{lem:tau-constant}}$ such that if $Q=u_1\dots u_q$ is an induced path of $G_\ell$ with $\tau(u_i)=\tau(u_1)$ for all $2\leq i\leq q$, then $|Q|\leq c_{\ref{lem:tau-constant}}$.
\end{lemma}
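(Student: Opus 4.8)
The plan is to show that an induced path $Q$ on which $\tau$ is constant — say equal to the value $a+1$, where either $a=\ell$ or $a$ is realised by some source — must live inside a small, concrete region of $G_\ell$, so that its length is bounded by an absolute constant. First I would fix the common value $\tau_0 = \tau(u_1)$. By Remark~\ref{rem:intertau}, for each vertex $u_i$ the node $\pi(u_i)$ is \emph{not} an internal node of $B_\ell(s)$ for any source $s$ of rank $<\tau_0$, and moreover $\pi(u_i)$ is either equal to a source $s$ of rank $\tau_0-1$ or is a leaf of such a $B_\ell(s)$ (or, in the degenerate case $\tau_0 = \ell+1$, $\pi(u_i)$ is the root or a leaf of $B_\ell$). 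I would then invoke Lemma~\ref{lem:depth-root} to pick the unique node $t^\star$ of minimum depth with $\pi^{-1}(t^\star)\cap V(Q)\neq\emptyset$; since every edge of $G_\ell$ joins $\preceq$-comparable bags, all of $\pi(Q)$ lies in the subtree of $B_\ell$ rooted at $t^\star$.

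The key step is to argue that $Q$ cannot descend far below $t^\star$. Consider the (unique, by Lemma~\ref{lem:separation} applied to the source $s$ of rank $\tau_0-1$ whose leaves $\pi(u_i)$ hit — or, in the case $\tau_0=\ell+1$, an easy direct argument) boundary $X_s = V(K^s)\cup\bigcup_{t\in L}\pi^{-1}(t)$, where $L$ is the set of leaves of $B_\ell(s)$. Since $\tau$ is constant $=\tau_0$ along $Q$, no vertex of $Q$ has $\pi$-image an internal node of $B_\ell(s)$, so $Q$ stays inside $X_s$ together with regions of strictly smaller $\tau$-value on the inside — but those have $\tau<\tau_0$, contradiction. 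Hence every vertex of $Q$ maps under $\pi$ either to $s$ itself or to a leaf of $B_\ell(s)$ (and, crucially, by Lemma~\ref{lem:tau-decreasing} any edge of $Q$ that would change $\tau$ is forbidden, so $Q$ genuinely cannot leave the union of $V(K^s)$ and the leaf-bags). Because bags have size $\le 7$ (Remark~\ref{rem:preimage-partition}) and the clique $K^s$ has only $3$ vertices, the only way $Q$ can be long is by using many distinct leaf-bags; but any two leaf-bags of $B_\ell(s)$ are connected in $G_\ell$ only through $V(K^s)$ (again Lemma~\ref{lem:separation}, or directly: a rib from $K^s$ to $\pred(t)$ for a leaf $t$, plus the internal subdivision vertices of the bag of $t$), so an induced path can enter and leave the leaf-bags at most a bounded number of times before it is forced to revisit $K^s$, which it may do at most $3$ times.

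Putting these bounds together: $Q$ visits $V(K^s)$ at most $3$ times, and between consecutive visits it traverses a bounded-length segment (it must stay within one leaf-bag of $B_\ell(s)$ together with the two tree edges/ribs connecting that bag to $K^s$, since leaving a leaf-bag forces passing through $K^s$ or changing $\tau$). Each such segment has at most $|\pi^{-1}(t)| \le 7$ internal vertices plus a constant number of adjacent subdivision and clique vertices, so $|Q| \le c_{\ref{lem:tau-constant}}$ for an explicit absolute constant (one can take something like $c_{\ref{lem:tau-constant}} = 30$, but the precise value is immaterial).

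The main obstacle I anticipate is the case analysis needed to confirm that $Q$, once inside a leaf-bag of $B_\ell(s)$, cannot ``sneak'' into the subtree below that leaf without raising or lowering $\tau$ — i.e., carefully ruling out, via Remark~\ref{rem:depth-decreasing} and Lemma~\ref{lem:tau-decreasing}, every way an edge of $Q$ could leave the region $V(K^s)\cup\bigcup_{t\in L}\pi^{-1}(t)$ while keeping $\tau$ constant. The degenerate case $\tau_0=\ell+1$ (where the relevant $s$ is the root, or the $\pi$-images are leaves of $B_\ell$) should be handled separately but is strictly easier, since then $Q$ lives in a bounded neighbourhood of the root clique or of a single leaf of $B_\ell$.
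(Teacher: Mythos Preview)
There is a genuine gap, and it is precisely the ``obstacle'' you flag at the end: the case analysis you hope will succeed actually fails.

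You set $\tau_0=\tau(u_1)$ and work with a source $s$ of rank $\tau_0-1$, aiming to confine $Q$ to $X_s=V(K^s)\cup\bigcup_{t\in L}\pi^{-1}(t)$. But a leaf $t$ of $B_\ell(s)$ is not a dead end for $Q$. Look at the nested interval structure: if $s$ has rank $a-1$ (writing $a=\tau_0$) and sits at depth $i+1$ inside the rank-$a$ interval $(i,j)$, then the leaves of $B_\ell(s)$ lie at depth $i+h(a-1)$, and by construction $(i+h(a-1)+1,\,j-1)\in\IS_\ell$ is again an interval of rank $a-1$. Hence each child $r$ of $t$ is itself a source of rank $a-1$, and Remark~\ref{rem:intertau}(2) gives $\tau=a$ on the bag of $r$ as well. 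The tree edges from $K^t$ down to $\tpred(r)$ therefore let $Q$ walk out of $X_s$ into the bag of $r$ while keeping $\tau$ constant. So $Q$ is \emph{not} trapped in a single $X_s$; it can thread through an unbounded chain of rank-$(a-1)$ sources, and your ``$K^s$ has only $3$ vertices'' counting argument never gets off the ground.

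The paper avoids this by working one level higher: it takes the (unique) source $s$ of rank $a=\tau_0$ such that $\pi(u_1)$ is an \emph{internal} node of $B_\ell(s)$, so that $Q\subseteq Z=\bigcup_{t\text{ internal in }B_\ell(s)}\pi^{-1}(t)$ (since $X_s$ has $\tau=a+1$). Inside $Z$ there are exactly two layers of rank-$(a-1)$ sources --- the children $s_1,s_2$ of $s$, and the set $D$ of grandchildren-of-leaves below them --- and the paper shows that the leaves of $B_\ell(r)$ for $r\in D$ \emph{are} dead ends (their parents have $\tau<a$ and their children have $\tau=a+1$), so the rib-counting argument applies there. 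This two-level analysis is the missing idea in your plan; your single-level version cannot close because the $\tau$-constant region genuinely spans two nested families of rank-$(a-1)$ trees.
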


\begin{proof}
Let $Q$ be such an induced path and $s$ be the source of rank $a=\tau(u_1)$ such that $\pi(u_1)$ is an internal node of $B_\ell(s)$.
By Remark~\ref{rem:intertau} we have $\tau(v) = a+1$ if $\pi(v) = s$ or $\pi(v)$ is a leaf of $B_\ell(s)$. By Lemma~\ref{lem:separation} this implies that $Q$ is contained in the union of the bags of the internal nodes of $B_\ell(s)$. Let us call this union $Z$. If $a=1$ there are at most two such nodes so clearly $Q$ has bounded size. So we may now assume $a>1$.

As $s$ is a source, there is an interval $(i,i'') \in \IS_\ell$ such that $\depth(s) = i$ and $\depth(t) = i''$ for every leaf $t$ of $B_\ell(s)$. We call $s_1$ and $s_2$ the two children of~$s$. Let $i' = i+ h(a-1)$. By construction $(i+1, i'), (i'+1, i''-1) \in \IS_\ell$; see Figure~\ref{fig:onigiri} for a representation of $B_\ell(s)$ by depths $i,i+1,\dots, i''$. (Notice that $i''\geq i+2$, by the third item of Remark~\ref{rem:nestint}, the definition of the function $h$ and the fact that $a>1$.)
Let $D$ be the set of descendants of $s$ that have depth $i'+1$ in $G_\ell$.

\begin{figure}[H]
    \centering
    \includegraphics[scale=1.1]{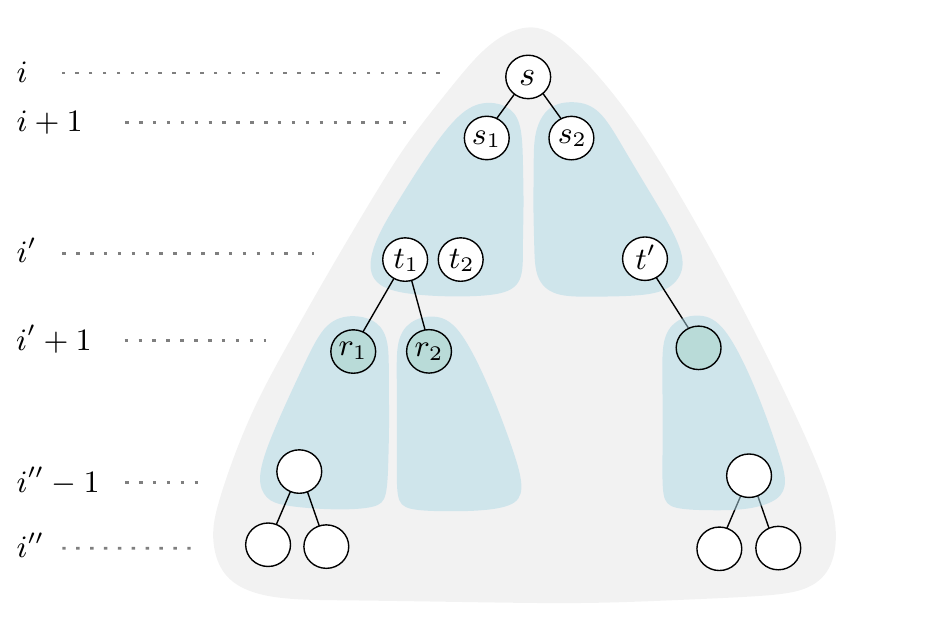}
    \caption{The situation of Lemma~\ref{lem:tau-constant}; full nodes represent elements in $D$.}\label{fig:onigiri}
\end{figure}

Let $r \in D$ and let $t$ be a leaf of $B_\ell(r)$. Then $t$ lies at depth $i''-1$ in $B_\ell$. Observe that in $G_\ell$ each edge incident to the bag of $t$ is of one of the following types:

\begin{itemize}
    \item a tree edge to a vertex $v \in \pi^{-1}(t')$ where $t'$ is the parent or a child of $t$;
    \item a rib to the top edge of $K^{r}$.
\end{itemize}

As observed above, edges of the former type lead to a vertex $v$ with $\tau(v)\neq a$. Therefore if $Q$ visits the bag of $t$ and other vertices of $Z$ then $Q$ follows a rib to the top edge of $K^{r}$.
As $Q$ may visit the top edge of $K^{r}$ at most twice we deduce that there are at most three nodes of $B_\ell(r)$ whose bag is intersected by $Q$ (those of $r$ and of two leaves of $B_\ell(r)$).
 
For each $j\in \{1,2\}$ the same argument applies to $s_j$ and the leaves of $B_\ell(s_j)$:  there are at most three nodes of $B_\ell(s_j)$ whose bags are intersected by $Q$, which are $s_j$ and at most $2$ leaves. We call these leaves $t,t'$ (and choose them arbitrarily if $Q$ visits less than two bags of leaves).

Each of $t$ and $t'$ has two children in $B_\ell$. For each such child $r$, we observed above (as $r\in D$) that $Q$ intersects at most 3 bags of nodes of $B_\ell(r)$.
This shows that among the bags of descendants of $s_j$, 
$Q$ intersects at most 15 of them. So in total $Q$ is contained in the union of at most 30 bags. The bound then follows from the fact that bags have bounded size (Remark~\ref{rem:preimage-partition}).
\end{proof}

\section{The induced paths of \texorpdfstring{$G_\ell$}{Gl} are short}
\label{sec:smallip}

An induced path $v_1\dots v_q$ of $G_\ell$ is called \emph{monotone} if $\depth(v_i)\leq \depth(v_j)$ for all $1\leq i<j\leq q$.
It is called \emph{bimonotone} if there exists $k\in \intv{1}{q}$ such that $v_k\dots v_1$ and $v_k\dots v_q$ are monotone.

\begin{lemma}\label{lem:bluegreen}
There is a constant $c_{\ref{lem:bluegreen}}$ such that the following holds.
Let $Q$ be an induced path in $G_\ell$ such that no source in $B_\ell$ is $Q$-special.
Then $|Q| \leq c_{\ref{lem:bluegreen}} \cdot \ell$.
\end{lemma}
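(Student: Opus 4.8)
The plan is to show that an induced path $Q$ with no $Q$-special source cannot change the value of $\tau$ too many times, and then invoke Lemma~\ref{lem:tau-constant} on each maximal run of constant $\tau$. Write $Q = u_1 \dots u_q$ and let $t_0$ be the node of minimum depth with $\pi^{-1}(t_0) \cap V(Q) \neq \emptyset$ (unique by Lemma~\ref{lem:depth-root}); after possibly reversing $Q$ I will track the behavior of $\tau$ along $Q$ starting from a vertex in $\pi^{-1}(t_0)$. By Lemma~\ref{lem:tau-decreasing}, whenever $\tau$ drops along an edge $u_iu_{i+1}$ it drops by exactly $1$ and the edge is a tree edge; the content of the no-special-source hypothesis should be that $\tau$ cannot \emph{increase} very often, because an increase of $\tau$ from $\tau(u_{i+1}) = \tau(u_i)+1$ forces (by Lemma~\ref{lem:tau-decreasing} applied to the edge read in the other direction) $u_{i+1}$ to be a clique vertex $u_{i+1} \in V(K^{\pi(u_{i+1})})$ with $\pi(u_{i+1})$ a source of rank $\tau(u_i)$, whereas the vertices of $Q$ adjacent to this clique vertex and lying on the source side have $\pi$-images that are internal nodes of $B_\ell(\pi(u_{i+1}))$ — which is exactly the configuration forbidden by $Q$-speciality, unless the relevant clique edge used is not the top edge. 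So the first key step is: formalize that an ``upward'' jump of $\tau$ that is not immediately undone corresponds to $Q$ entering, through the top edge of some source clique $K^s$, the region of bags of internal nodes of $B_\ell(s)$ — and this is precisely what ``$s$ is $Q$-special'' asserts.

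Assuming that step, the structure of the sequence $\tau(u_1), \dots, \tau(u_q)$ is highly constrained: ignoring the transient ``blips'' (a jump up by $1$ on a tree edge immediately followed by a jump back down, which by Lemma~\ref{lem:separation} can only stay local and contributes only a bounded number of vertices each time), the values of $\tau$ along $Q$ form essentially a monotone sequence in $\intv{1}{\ell+1}$. Concretely I would argue that once $Q$ has ``left'' a region $B_\ell(s)$ through $X_s$, the separation provided by Lemma~\ref{lem:separation} prevents it from coming back, so the values $\tau(u_i)$ are non-decreasing apart from $O(1)$-length excursions near each transition. Hence there are at most $O(\ell)$ maximal blocks of consecutive indices on which $\tau$ is constant (one per value in $\intv{1}{\ell+1}$, plus the bounded-size excursions).

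The second key step is to bound each such block: on a maximal run $u_a \dots u_b$ with $\tau$ constant, Lemma~\ref{lem:tau-constant} gives $b - a + 1 \leq c_{\ref{lem:tau-constant}}$. Summing over $O(\ell)$ blocks, plus the bounded contributions from the excursions, yields $|Q| \leq c_{\ref{lem:bluegreen}} \cdot \ell$ for a suitable absolute constant $c_{\ref{lem:bluegreen}}$ (depending only on $c_{\ref{lem:tau-constant}}$ and the bag size from Remark~\ref{rem:preimage-partition}).

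I expect the main obstacle to be the first step: carefully translating the combinatorial definition of ``$Q$-special'' into the statement that $\tau$ along $Q$ is monotone up to bounded local excursions. The delicate points are (i) handling the case where $Q$ crosses a source clique $K^s$ using one of its \emph{non-top} clique edges — here I must check, via Remark~\ref{rem:depth-decreasing} and the structure of ribs, that such a crossing either does not change $\tau$ or is immediately corrected; and (ii) making precise the ``no return'' phenomenon, i.e. that an induced path, once it exits the separator $X_s$ on the internal side and comes back out, would have to reuse a vertex of $X_s$, contradicting that $Q$ is a path — this needs the separation lemma together with the observation that $Q$ meeting $X_s$ twice with an internal-node bag in between makes $s$ be $Q$-special. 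Once these are nailed down, the counting is routine.
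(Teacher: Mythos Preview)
Your high-level strategy --- bound the number of maximal runs of constant $\tau$ by $O(\ell)$ and apply Lemma~\ref{lem:tau-constant} to each --- matches the paper. But there is a genuine gap in your ``first key step''.

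You want to argue: if $\tau(u_{i+1}) > \tau(u_i)$, then by Lemma~\ref{lem:tau-decreasing} (read on the edge $u_{i+1}u_i$) the vertex $u_{i+1}$ lies in $V(K^{\pi(u_{i+1})})$ with $\pi(u_{i+1})$ a source. But look at the hypothesis of the second sentence of Lemma~\ref{lem:tau-decreasing}: you need $\depth(u_i) \geq \depth(u_{i+1})$. You have not established any control over depths, so this conclusion is unavailable. Without it, the $\tau$-increase might just as well occur because $u_{i+1}$ sits in the bag of a \emph{leaf} of some $B_\ell(s)$ rather than at the source $s$, and then the $Q$-special argument does not bite. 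Your ``bounded excursions / blips'' language is precisely where this issue is hiding, and it is not handled.

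The paper closes this gap by inserting an intermediate step you are missing entirely: it first proves, directly from the no-$Q$-special hypothesis and Remark~\ref{rem:depth-decreasing}, that $Q$ is \emph{bimonotone in depth} --- there is an index $k$ so that $\depth$ is non-increasing on $v_k\dots v_1$ and on $v_k\dots v_q$. Only then does Lemma~\ref{lem:tau-decreasing} apply cleanly on each monotone half, giving that $\tau$ is genuinely monotone (non-increasing then non-decreasing) with \emph{no} excursions at all. Once you have that, the counting is exactly the routine sum you describe. So the fix is not to patch the excursion analysis but to prove depth-bimonotonicity first; that claim is where the no-$Q$-special hypothesis does its real work.
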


\begin{proof}
Let $Q = v_1\dots v_q$ be such an induced path. 
The proof is split into three claims.

\begin{claim}\label{claim:bimonotone}
The path $Q$ is bimonotone
\end{claim}
\begin{proof}
By Lemma~\ref{lem:depth-root} there is a unique node $t\in V(B_\ell)$ minimizing the depth and whose bag is visited by $Q$.
Let $k$ be the minimum index such that $v_k$ belongs to the bag of $t$.
We now show that $\depth(v_k) \leq \dots \leq \depth(v_1)$, i.e., that $v_k\dots v_1$ is monotone. 
Let us assume toward a contradiction that it is not the case, and let $i\in \intv{1}{k}$ be the maximum integer such that $\depth(v_{i})>\depth(v_{i-1})$. 
Take $j\in \intv{i}{k}$ minimum such that $\depth(v_{j+1})<\depth(v_i)$.
By the choice of $i,j$ we note that the vertices in the subpath $v_i\dots v_j$ have equal depth. Because the edges in $G_\ell$ only connect vertices of bags corresponding to $\preceq$-comparable vertices of $B_\ell$, two adjacent vertices of equal depth necessarily belong to the same bag. Therefore, $\{v_i,\dots, v_j\}$ is a subset of the bag of a single node $s\in V(B_\ell)$.
Furthermore by Remark~\ref{rem:depth-decreasing} the edges $v_{i-1}v_i$ and $v_jv_{j+1}$ are either tree edges connecting a clique vertex to a top-predecessor, or ribs.

We distinguish cases depending on the type of the edges $v_{i-1}v_i$ and $v_jv_{j+1}$.

In the case where both $v_{i-1}v_i$ and $v_jv_{j+1}$ are tree edges, $v_{i-1}$ and $v_{j+1}$ belong to the same clique $K^{t'}$ (where $t'$ is the parent of $t$ in $B_\ell$), which contradicts the fact that $Q$ is induced.
We now distinguish cases depending on whether $v_{i-1}v_i$ or $v_jv_{j+1}$ is a rib.

In the case where $v_{i-1}v_i$ is a rib with $v_{i-1}$ in the top edge of some source $s\in V(B_\ell)$,
we consider the edge $v_jv_{j+1}$. Note that it cannot be a rib as otherwise $v_{i-1}$ and $v_{j+1}$ would both be part of the top edge of $K^s$, contradicting the fact that $Q$ is induced. So it is a tree edge. As $\depth(v_{j+1}) = \depth(v_i) - 1$, $\pi(v_{j+1})$ is an internal node of $B_\ell(s)$. Hence $s$ is $Q$-special, a contradiction.
The remaining case where $v_jv_{j+1}$ is a rib is symmetric.
This proves that such a $i$ does not exist. 
The proof of the monotonicity of $v_k\dots v_q$ is symmetric, yielding the bimonotonicity of $Q$.
\cqed
\end{proof}

By the virtue of Claim~\ref{claim:bimonotone} there is an integer $k \in \intv{2}{q}$ such that each of $v_k\dots v_1$ and $v_k\dots v_q$ is monotone. We choose such a $k$ minimum and set $k'$ as the maximum integer such that $v_k,\dots, v_{k'}$ all lie in a same bag and $v_{k'}\dots v_q$ is monotone, with possibly $k=k'$.

\begin{claim}\label{claim:tau-increasing}
For every $i,j \in \intv{k'+1}{q}$ with $i<j$, we have $\tau(v_i)\leq \tau(v_j)$.
\end{claim}

\begin{proof}
The proof is by contradiction.
Let $i\in \intv{k'+1}{q-1}$ be the minimum integer such that $\tau(v_{i+1})<\tau(v_i)$ and let $s = \pi(v_i)$.
By Lemma~\ref{lem:tau-decreasing} and since $\depth(v_i)\leq \depth(v_{i+1})$, $s$ is a source, $v_i$ belongs to the clique $K^s$, $v_iv_{i+1}$ is a tree edge and we get that $v_{i+1}$ belongs to the bag of a child of $s$. 
If $v_i$ is a vertex incident to the top edge of $K^s$ then $s$ is $Q$-special, a contradiction. 
Thus $v_i$ is the only vertex of $K^s$ that does not belong to its top edge. 
By monotonicity of $v_{k'}\dots v_q$ we have $\depth(v_{k'})\leq \depth(v_{i-1})\leq \depth(v_i)$ and so $v_{i-1}$ cannot be a top-predecessor of a child of $s$.
Hence $v_{i-1}$ must belong to the top edge of $K^s$, yielding that $s$ is $Q$-special, a contradiction.
\cqed
\end{proof}

A symmetric proof shows the following symmetric statement.
\begin{claim}\label{claim:tau-increasing2}
For every $i,j \in \intv{1}{k-1}$ with $i<j$, we have $\tau(v_i)\geq \tau(v_j)$.
\end{claim}

We are now ready to conclude the proof. Recall that the function $\tau$ has values in $\intv{1}{\ell+1}$. It is non-increasing on $v_1\dots v_{k-1}$ (Claim~\ref{claim:tau-increasing2}) and has values in $\{1,\dots, \ell\}$ (as $\tau(v_{k-1})<\tau(v_k)$, by minimality of $k$), non-decreasing on $v_{k'+1}\dots v_q$ (Claim~\ref{claim:tau-increasing}) and does not keep the same value on more than $c_{\ref{lem:tau-constant}}$ consecutive vertices (Lemma~\ref{lem:tau-constant}). 
Now since $v_k\dots v_{k'}$ lies in a single bag, it has order $\leq7$.
We conclude that $Q$ has order at most $c_{\ref{lem:tau-constant}}(2\ell+1)+7$, which is bounded from above by $c\cdot \ell$ for some constant~$c$, as $\ell\geq 1$. 
\end{proof}

We now show that if an induced path visits the top edge of the bag of a source~$s$, as well as a bag of an internal node of $B_\ell(s)$, then one of its sides lives in the vertices of the bags of $B_\ell(s)$. 

\begin{lemma}\label{eq:interval-subtree}
Let $Q=v_1\dots v_q$ be an induced path of $G_\ell$ with $v_1$ an endpoint of the top edge of $K^s$ for some source $s$ in $B_\ell$. If $s$ is a $Q$-special source, then $V(Q) \subseteq V( G_\ell(s))\setminus \pred(s)$.
\end{lemma}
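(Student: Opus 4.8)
The plan is to show that starting from $v_1$, the induced path $Q$ is ``trapped'' inside the vertices of the bags of internal nodes of $B_\ell(s)$ together with $\pred(s)$, and then use the separator $X_s$ from Lemma~\ref{lem:separation} to pin it down to $V(G_\ell(s))\setminus\pred(s)$. First I would set up notation: let $a$ be the rank of $s$, so $B_\ell(s)$ has depth $h(a)$; write $L$ for the set of leaves of $B_\ell(s)$ and $X_s = V(K^s)\cup\bigcup_{t\in L}\pi^{-1}(t)$ as in Lemma~\ref{lem:separation}. The key observation is that $v_1$ is on the top edge of $K^s$, hence $v_1\in X_s$. Since $s$ is $Q$-special there is some vertex $w$ of $Q$ with $\pi(w)$ an internal node of $B_\ell(s)$, i.e.\ $w\in V(G_\ell(s))\setminus(\pred(s)\cup X_s)$ (here I would note carefully that an internal node of $B_\ell(s)$ is distinct from $s$ and from every leaf, so its bag is disjoint from $X_s$). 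By Lemma~\ref{lem:separation}, every path from $w$ to $(V(G_\ell)\setminus V(G_\ell(s)))\cup\pred(s)$ goes through $X_s$.

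Next I would analyze the structure of the neighborhood of $v_1$ in $G_\ell$ to control the first step(s) of $Q$. The vertex $v_1$ is a clique vertex incident to the top edge of $K^s$. Its neighbors in $G_\ell$ are: the two other vertices of $K^s$; one bottom-predecessor of $s$ (the one reached by the tree edge from $\mu_s(\text{parent})$, if $s$ is not the root); and, since $s$ is a source of rank $a$, the ribs emanating from the top edge of $K^s$, which go to $\tpred(t)\cup\bpred(t)$ for leaves $t$ of $B_\ell(s)$. So every neighbor of $v_1$ lies in $V(K^s)\cup\pred(s)\cup\bigcup_{t\in L}(\tpred(t)\cup\bpred(t))\subseteq X_s\cup\pred(s)$. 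In particular $v_2\in X_s\cup\pred(s)$. I would then argue that $Q$ cannot leave the region $R := (V(G_\ell(s))\setminus\pred(s))$ ``through the wrong side.'' The clean way: suppose for contradiction that $V(Q)\not\subseteq R$, i.e.\ $Q$ has a vertex in $(V(G_\ell)\setminus V(G_\ell(s)))\cup\pred(s)$. Together with $w\in R\setminus X_s$, and using that $X_s$ separates $R\setminus X_s$ from the complement, the path $Q$ must visit $X_s$ at least twice on two ``sides'' of $w$ — or more precisely, the subpath from $v_1$ (or from the escaping vertex) to $w$ crosses $X_s$. The point is to count crossings: $V(K^s)$ has $3$ vertices and $v_1$ is one of them; the leaf bags $\pi^{-1}(t)$ for $t\in L$ each have $7$ vertices, but I claim $Q$ can use very few of them because after entering a leaf bag the only way back toward internal nodes is via a rib to $V(K^s)$, and $Q$ is induced.

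The main obstacle, and where I expect to spend the real effort, is ruling out that $Q$ ``bounces'': goes $v_1 \to$ (leaf bag via rib) $\to$ back toward internal nodes $\to \dots \to w \to \dots \to$ escapes. I would handle this by exploiting that $Q$ is induced together with the rib structure: all ribs from $K^s$ attach to the \emph{same} edge $v_1v_2'$, namely the top edge of $K^s$ (the ribs are $u_1x_1,u_1x_2,u_2y_1,u_2y_2$ with $u_1u_2$ the top edge of $K^s$). So if $Q$ used two ribs of source $s$, it would visit $v_1$ and the other top-edge endpoint and have a chord, contradicting inducedness — hence $Q$ uses \emph{at most one} rib of source $s$, and since $v_1$ is already a top-edge endpoint, actually $Q$ enters a leaf bag at most once via such a rib, and that happens at the very start. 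Therefore: either $Q$ stays within $V(G_\ell(s))\setminus\pred(s)$ for the whole path (done), or at some point it reaches $X_s$ and crosses to the complement; but crossing out of $X_s$ toward $\pred(s)$ or toward $V(G_\ell)\setminus V(G_\ell(s))$ must use a clique vertex of $K^s$ or a leaf-bag vertex, and the induced + rib constraints, plus monotonicity-of-depth-type bookkeeping à la Remark~\ref{rem:depth-decreasing}, force a chord or a revisit. Concretely I would argue: the \emph{first} vertex of $Q$ not in $R$, call it $v_{m}$, together with the fact that $v_1,\dots,v_{m-1}\in R$ and $v_1\in X_s$: then $v_{m-1}\in X_s$, so $v_{m-1}$ is either in $V(K^s)$ or in a leaf bag; in either case $v_{m-1}$ and $v_1$ are both in $X_s$ and ``close'' in a way incompatible with $Q$ being induced unless $m-1=1$, i.e.\ $v_1\notin R$ and $Q$ escapes immediately — but then the $Q$-special witness $w$ lies on the \emph{other} side of $v_1$, forcing $Q$ to cross $X_s$ a second time near $v_1$, again a chord. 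I would wrap this up into a short case analysis on whether $v_1$'s two possible ``exits'' ($V(K^s)$ side vs.\ leaf-bag side) are used, concluding $V(Q)\subseteq V(G_\ell(s))\setminus\pred(s)$.
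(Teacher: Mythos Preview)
Your overall strategy matches the paper's: use the separator $X_s$ from Lemma~\ref{lem:separation}, then derive a chord with $v_1$ from the rib structure. However, two of the combinatorial claims you rely on are wrong, and they are precisely where the argument has to do real work.

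First, the assertion that ``after entering a leaf bag the only way back toward internal nodes is via a rib to $V(K^s)$'' is false. From the bag of a leaf $t$ of $B_\ell(s)$ you can return to the interior by tree edges through $\tpred(t)$ into $K^{t'}$, where $t'$ is the parent of $t$; no rib is needed. Consequently your rib-counting argument (``$Q$ uses at most one rib of source $s$'') does not by itself rule out a traversal of a leaf bag. (Relatedly, your description of the neighborhood of $v_1$ misses that $v_1$, being incident to one further edge of $K^s$ besides the top edge, also has a tree-edge neighbor in $\tpred(c)$ for some child $c$ of $s$; so $v_2\in X_s\cup\pred(s)$ need not hold.)

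The paper closes exactly this gap with a different observation. For every leaf $t$ of $B_\ell(s)$, the two subdivided paths $L(t',t)$ and $R(t',t)$ (with $t'$ the parent of $t$) are the only tree-edge routes between the bag of $t$ and the bag of $t'$, and $v_1$ has a rib-neighbor in \emph{each} of them (namely one of the two top-predecessors or one of the two bottom-predecessors of $t$, depending on which endpoint of the top edge $v_1$ is). Hence any subpath of $Q$ that traverses a leaf bag between the interior of $B_\ell(s)$ and the outside must pass through a neighbor of $v_1$, producing a chord. Combined with the easy fact that $Q$ cannot cross $X_s$ at $K^s$ (again a chord with $v_1$, except possibly at $v_1,v_2$ where the path would terminate inside $R$), this yields the contradiction. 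Your final ``$v_{m-1}$ and $v_1$ are both in $X_s$ and close'' step is where this mechanism should go, but as written it is only a hope; replace it with the $L/R$-path argument and the proof goes through.
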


\begin{proof}
Let $X_s$ be the set defined in Lemma~\ref{lem:separation}. Let $i$ be the minimum integer such that $\pi(v_i)$ is an internal node of $B_\ell(s)$. Such an integer exists as $s$ is $Q$-special.

We show that there is no subpath $Q'$ of $Q$ from $v_i$ to some vertex $v\notin V(G_\ell(s))\setminus\pred(s)$. Observe that this implies the claimed statement as it shows that $v_i\dots v_q$ does not leave $G_\ell - \pred(s)$, and also that the same is true of $v_i\dots v_1$. Towards a contradiction suppose that such a path exists. 
Then by Lemma~\ref{lem:separation}, $Q'$ intersects~$X_s$. 
Note that $Q'$ does not intersect $K^s$ if it is a subpath of $v_i\dots v_q$ as otherwise there would be an edge between $v_1$ and an internal vertex of $Q'$, whereas $Q$ is induced. 
In the case where $Q'$ is a subpath of $v_i\dots v_1$, it may intersect $K^s$ on at most two vertices, possibly $v_1$ and $v_2$ where it stops.
Thus $Q'$ cannot leave $G_\ell - \pred(s)$ through the clique~$K^s$.
So it intersects the bag of a leaf $t$ of $B_\ell(s)$ and, in order to reach $v_i$, follows one of $L(\pi(v_i), t)$ or $R(\pi(v_i), t)$ (as defined in Section~\ref{sec:blozup} when constructing the blow-up). Recall that $v_1$ has neighbors in both such paths (these neighbors are endpoints of ribs incident to $v_1$). This contradicts the fact that $Q$ is induced.
\end{proof}

\begin{lemma}\label{lem:twosou}
Let $Q$ be an induced path of $G_\ell$ and let $a \in \intv{1}{\ell}$. At most two sources of rank $a$ in $B_\ell$ are $Q$-special.
\end{lemma}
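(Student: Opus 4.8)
The plan is to argue by contradiction: suppose $Q$ has three distinct $Q$-special sources $s_1,s_2,s_3$ of rank $a$, and use Lemma~\ref{eq:interval-subtree} to confine each of them to a disjoint piece of $Q$, which is impossible since a path has only two ends.

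First I would establish the following consequence of Remark~\ref{rem:nestint}: if $s$ and $s'$ are distinct sources of the same rank $a$, then $V(G_\ell(s))\cap V(G_\ell(s'))=\emptyset$. Let $i,i'$ be their depths, so that $(i,i+h(a)-1),(i',i'+h(a)-1)\in\IS_\ell$. If $i=i'$ then $s$ and $s'$ are incomparable in $B_\ell$, hence $B_\ell(s)$ and $B_\ell(s')$ are vertex-disjoint subtrees and so are $G_\ell(s)$ and $G_\ell(s')$. If $i\neq i'$, say $i<i'$, then since both intervals have the same length $h(a)$ we get $i+h(a)-1<i'+h(a)-1$; the four endpoints are pairwise distinct (item~2 of Remark~\ref{rem:nestint}) and the two intervals do not cross (item~4), so in fact $i+h(a)-1<i'$. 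Every vertex of $G_\ell(s)$ has depth in $\intv{i}{i+h(a)-1}$ and every vertex of $G_\ell(s')$ has depth in $\intv{i'}{i'+h(a)-1}$, and these ranges are disjoint, so the two vertex sets are disjoint.

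Next, for each $j\in\{1,2,3\}$, since $s_j$ is $Q$-special, $Q$ contains a vertex $u_j$ on the top edge of $K^{s_j}$ and a vertex $v_j$ with $\pi(v_j)$ an internal node of $B_\ell(s_j)$; note $u_j\neq v_j$ because $s_j$ is the root, not an internal node, of $B_\ell(s_j)$. Let $Q_j$ be the sub-path of $Q$ that starts at $u_j$ and runs to whichever endpoint of $Q$ lies on the same side of $u_j$ as $v_j$. Then $Q_j$ is an induced path whose first vertex $u_j$ is an endpoint of the top edge of $K^{s_j}$, and $s_j$ is $Q_j$-special (witnessed by $u_j$ and $v_j\in V(Q_j)$). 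By Lemma~\ref{eq:interval-subtree}, $V(Q_j)\subseteq V(G_\ell(s_j))\setminus\pred(s_j)\subseteq V(G_\ell(s_j))$. By the previous paragraph the sets $V(G_\ell(s_1)),V(G_\ell(s_2)),V(G_\ell(s_3))$ are pairwise disjoint, hence $Q_1,Q_2,Q_3$ are pairwise vertex-disjoint sub-paths of $Q$.

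Finally I would conclude by pigeonhole. Write $Q=w_1\dots w_q$; note $q\geq 2$ since $s_1$ is $Q$-special. Each $Q_j$ is, by construction, a contiguous segment of $Q$ reaching an endpoint of $Q$, so $w_1\in V(Q_j)$ or $w_q\in V(Q_j)$. With three sub-paths and two endpoints, two of them — say $Q_1$ and $Q_2$ — share an endpoint of $Q$, contradicting their disjointness. Hence at most two sources of rank $a$ are $Q$-special.

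I do not expect a genuine obstacle here; the only delicate point is the disjointness claim in the second paragraph, where one must rule out that two same-rank intervals are nested or overlap — this is precisely where the equal-length observation combines with items~2 and~4 of Remark~\ref{rem:nestint}. Everything else is a direct application of Lemma~\ref{eq:interval-subtree} plus counting the ends of a path.
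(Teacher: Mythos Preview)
Your proof is correct and follows essentially the same approach as the paper: both apply Lemma~\ref{eq:interval-subtree} to a tail of $Q$ starting at a top-edge vertex of a $Q$-special source, and both use that the subgraphs $G_\ell(s)$ for distinct sources of the same rank are disjoint (the paper phrases this as ``$B_\ell(s_i)$ contains only one source of rank~$a$''). The only cosmetic difference is packaging---you argue by contradiction via pigeonhole on three sources and two path-endpoints, whereas the paper picks two $Q$-special sources with minimal index gap and argues directly that nothing lies in between---but the underlying idea is the same.
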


\begin{proof}
Let $Q=v_1\dots v_q$.
We may assume that $B_\ell$ has at least two $Q$-special sources of rank $a$ as otherwise the statement holds.
So there exist $1\leq i<j\leq \ell$ and two $Q$-special sources of rank $a$ that we call $s_i$ and $s_j$ and such that $v_i$ and $v_{j}$ are respectively incident to the top edges of $K^{s_i}$ and $K^{s_j}$. We choose them with $j-i$ minimum.
As $s_i$ is a $Q$-special source, by Lemma~\ref{eq:interval-subtree}, one of the following holds:
\[
\{v_1,\dots ,v_i\} \subseteq V(G_\ell(s_i)) \quad \text{or} \quad \{v_i, \dots, v_q\} \subseteq V(G_\ell(s_i)).
\]
The same argument applied to $s_{j}$ yields that one of the following holds:
\[
\{v_1,\dots ,v_{j}\} \subseteq V(G_\ell(s_{j})) \quad \text{or} \quad \{v_{j}, \dots, v_q\} \subseteq V(G_\ell(s_{j})).
\]

However, the trees $B_\ell(s_i)$ and $B_\ell(s_{j})$ only contain one source of rank $a$, respectively $s_i$ and~$s_{j}$. Therefore among the choices above only the two following outcome are possible:
\[
\{v_1,\dots ,v_i\} \subseteq V(G_\ell(s_i)) \quad \text{and}\quad \{v_{j}, \dots, v_q\} \subseteq V(G_\ell(s_{j})).
\]

So $v_1\dots, v_i$ and $v_{j} \dots v_q$ contribute to two $Q$-special sources of rank~$a$ in~$Q$. By minimality of $j-i$, the section of $Q$ between $v_i$ and $v_{j}$ does not contribute to any $Q$-special source of rank $a$. Hence at most two sources of rank $a$ of $B_\ell$ are $Q$-special, as claimed. 
\end{proof}

We are now ready to prove our main result, that we restate here.

\main*

\begin{proof}
We fix $\ell \in \N$ and consider the graph $G_\ell$. Let $q$ denote the maximum order of an induced path in $G_\ell$.
We first show the following.

\begin{claim}\label{cl:elldeux}
 There is a constant $c$ such that $q \leq c\ell^2$.  
\end{claim}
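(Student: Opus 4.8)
The plan is to bound $q$ by decomposing an arbitrary induced path $Q$ of $G_\ell$ into pieces according to the behavior of its $Q$-special sources, and then applying Lemma~\ref{lem:bluegreen} and Lemma~\ref{lem:twosou}. Fix a maximum induced path $Q = v_1 \dots v_q$ of $G_\ell$. The key quantity to control is the total number of $Q$-special sources: by Lemma~\ref{lem:twosou}, for each rank $a \in \intv{1}{\ell}$ at most two sources of rank $a$ are $Q$-special, so $Q$ has at most $2\ell$ $Q$-special sources in total.

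First I would argue that the $Q$-special sources partition $Q$ into at most $2\ell + 1$ subpaths, each of which is an induced path with no $Q$-special source, so that Lemma~\ref{lem:bluegreen} applies to each piece. The natural way to do this: for each $Q$-special source $s$, Lemma~\ref{eq:interval-subtree} tells us that one side of $Q$ (cut at the vertex of $Q$ lying on the top edge of $K^s$) is entirely contained in $V(G_\ell(s)) \setminus \pred(s)$. So each $Q$-special source $s$ singles out an index $i_s$ (the position on $Q$ of the top-edge vertex of $K^s$), and the collection of these indices cuts $Q$ into at most $2\ell + 1$ intervals. Along each such interval $Q_m = v_{a_m}\dots v_{b_m}$, I claim no source is $Q_m$-special: if some source $s'$ were $Q_m$-special, then in particular $Q$ has a vertex on the top edge of $K^{s'}$ and a vertex whose image is an internal node of $B_\ell(s')$, so $s'$ is also $Q$-special, hence $s' = s$ for one of the cutting sources $s$; but the cut point $i_s$ is an endpoint of $Q_m$, and the definition of $Q$-special requires both witnessing vertices to lie strictly inside — a short case check using Lemma~\ref{eq:interval-subtree} (one side of $Q$ at $i_s$ stays inside $V(G_\ell(s))$, so the internal-node witness cannot be on the other piece) rules this out. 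I should be slightly careful here about whether "$Q_m$-special" is defined relative to the subpath $Q_m$ or to $Q$; I would phrase the claim so that a source that is special for the subpath would be special for $Q$, which is the direction I need.

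Given this decomposition, the rest is arithmetic: $Q$ is covered by at most $2\ell + 1$ induced subpaths, each of order at most $c_{\ref{lem:bluegreen}} \cdot \ell$ by Lemma~\ref{lem:bluegreen}, and consecutive pieces overlap in at most one vertex, so
\[
q \;\leq\; (2\ell+1) \cdot c_{\ref{lem:bluegreen}} \cdot \ell \;\leq\; c \ell^2
\]
for a suitable constant $c$ (absorbing small terms using $\ell \geq 1$). This proves Claim~\ref{cl:elldeux}. The main obstacle I anticipate is the bookkeeping in the middle step — making precise exactly how the $Q$-special sources cut $Q$ into pieces and verifying that each piece genuinely has no special source, without circularity in the definitions. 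The separation statement Lemma~\ref{eq:interval-subtree} is doing the real work there, and one has to check that the cut points behave well when a source's subtree $B_\ell(s)$ nests inside another's; Lemma~\ref{lem:twosou} already encapsulates the nesting control for sources of equal rank, and Remark~\ref{rem:nestint}(4) (intervals do not cross) should handle sources of different ranks. Once Claim~\ref{cl:elldeux} is in hand, combining it with Lemma~\ref{lem:taillede} ($|V(G_\ell)| \geq 2^{2^{\ell+1}}$, so $\ell \leq \log\log|V(G_\ell)|$ up to constants) and Lemma~\ref{lem:ham} (the Hamiltonian path gives $n = |V(G_\ell)|$) yields $q = O((\log\log n)^2)$, which is Theorem~\ref{th:main}.
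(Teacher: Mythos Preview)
Your overall architecture is exactly the paper's: bound the number of $Q$-special sources by $2\ell$ via Lemma~\ref{lem:twosou}, cut $Q$ into $O(\ell)$ pieces none of which has a special source, then apply Lemma~\ref{lem:bluegreen} to each piece. The difference, and the gap, is in how you do the cutting.

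You propose to cut at one index $i_s$ per $Q$-special source $s$, namely a position where $Q$ meets the top edge of $K^s$, and then argue that each resulting piece $Q_m$ has no $Q_m$-special source. The justification you give --- ``the definition of $Q$-special requires both witnessing vertices to lie strictly inside'' --- is simply false: the definition places no such restriction, so an endpoint of $Q_m$ can serve as the top-edge witness. Worse, the top edge of $K^s$ has \emph{two} endpoints, and both may lie on $Q$; cutting at only one of them leaves the other available as a witness inside an adjacent piece. Your appeal to Lemma~\ref{eq:interval-subtree} does not rescue this: that lemma only tells you that \emph{if} $s$ is special for a subpath starting at a top-edge vertex, \emph{then} the subpath lies in $V(G_\ell(s))$ --- it does not prevent $s$ from being special for that subpath.

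The paper sidesteps all of this bookkeeping with a blunter cut: let $A$ be the set of \emph{all} vertices $v\in V(Q)$ such that $\pi(v)$ is a $Q$-special source. Since there are at most $2\ell$ such sources and each bag has at most $7$ vertices (Remark~\ref{rem:preimage-partition}), $|A|\leq 14\ell$. Now any maximal subpath $Q'$ of $Q-A$ can have no $Q'$-special source, for the trivial reason that a $Q'$-special source $s'$ would also be $Q$-special, and then the required top-edge vertex of $K^{s'}$ on $Q'$ would satisfy $\pi(\cdot)=s'$ and hence lie in $A$, a contradiction. This removes the need for Lemma~\ref{eq:interval-subtree} entirely at this stage (it is only used inside the proof of Lemma~\ref{lem:twosou}), and the arithmetic $|Q|\leq |A| + (|A|+1)\cdot c_{\ref{lem:bluegreen}}\ell = O(\ell^2)$ finishes the claim.
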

\begin{proof}
Let $Q$ be an induced path of $G_\ell$. By Lemma~\ref{lem:twosou}, for every $a \in \intv{1}{\ell}$ there are at most two sources of $B_\ell$ of rank $a$ that are $Q$-special. Therefore in total at most $2\ell$ sources of $B_\ell$ are $Q$-special.
Let $A \subseteq V(Q)$ be the set of vertices of $Q$ such that $\pi(v)$ is $Q$-special. For any node $s\in V(B_\ell)$, $|\pi^{-1}(s)| \leq 7$ (see Remark~\ref{rem:preimage-partition}) hence $|A| \leq 14\ell$.
We now consider a maximal subpath $Q'$ of $Q$ that does not intersect~$A$. 
By definition of $A$, no source of $B_\ell$ is $Q'$-special. Therefore by Lemma~\ref{lem:bluegreen}, $|Q'| \leq c_{\ref{lem:bluegreen}}\cdot \ell$. 
We conclude that $|Q| \leq |A| + (2\ell+1)\cdot c_{\ref{lem:bluegreen}}\cdot \ell \leq c\cdot \ell^2$ for some appropriately chosen constant $c$. Hence $q\leq c\ell^2$.\cqed
\end{proof}

Let $n = |G_\ell|$. As proved in Lemmas~\ref{lem:ham} and~\ref{lem:2deg}, $G_\ell$ has a path of order $n$ and has degeneracy $2$. Also, by Lemma~\ref{lem:taillede}, $n \geq 2^{2^{\ell+1}}$.
So $c(\log \log n)^2 \geq c(\ell+1)^2\geq q$ (by Claim~\ref{cl:elldeux}) proving the theorem.
\end{proof}

\section{Bounding the coloring numbers of \texorpdfstring{$G_\ell$}{Gl}}
\label{sec:colnum}
Given a total ordering $\sigma$ of the vertices of a graph $G$ and an integer $r\geq 1$, we say that a vertex $y$ is $r$-reachable from a vertex $x$ (with respect to $\sigma$) if $y<_\sigma x$ and there is a path $P$ of length (i.e., number of edges) at most $r$ such that every internal vertex $z$ of $P$ satisfies $x<z$.

The \emph{$r$-coloring number} $\col_r(G)$ is the minimum integer $k$ such that there is a total ordering $\sigma$ on $V(G)$ such that at most $k$ vertices are $r$-reachable from every vertex of~$G$~\cite{kierstead2003orderings}.

The value $\col_1(G)$ is exactly the degeneracy of $G$. We saw in Lemma~\ref{lem:2deg} that the $G_\ell$'s have degeneracy~2. Below we show that furthermore their coloring numbers are bounded from above by a linear function of~$r$, i.e., Theorem~\ref{th:linecol}, that we restate here for convenience.

\linecol*

\begin{proof}
Let $\ell\in \N_{\geq 1}$. We will prove that for every integer $r\geq 1$, $\col_r(G_\ell) \leq 2r+8.$
Let $\sigma$ be any total order on $V(G_\ell)$ such that for every $u,v \in V(G_\ell)$, if $\pi(u)\prec \pi(v)$ then $u<_\sigma v$. Let $r \in \N_{\geq 1}$.

    \begin{claim}\label{cl:sablier}
        Let $x_1 \in V(G_\ell)$ and let $x_1\dots x_p$ (for some $p\in \intv{2}{r+1}$) be a path witnessing that $x_p$ is $r$-reachable from $x_1$.
        Then $\pi(x_p) \preceq \pi(x_1)$ and for every $i\in \intv{2}{p-2}$,  $\pi(x_1) \preceq \pi(x_i)$.
    \end{claim}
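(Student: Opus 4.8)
The plan is to exploit the single structural fact that every edge of $G_\ell$ joins two vertices whose images under $\pi$ are $\preceq$-comparable in $B_\ell$, together with the monotonicity of the order $\sigma$ along $\preceq$-chains. First I would record the consequence of $r$-reachability: all internal vertices $x_2,\dots,x_{p-1}$ satisfy $x_1 <_\sigma x_i$, and $x_p <_\sigma x_1$. Since the ordering $\sigma$ respects $\pi$-ancestry ($\pi(u)\prec\pi(v)$ implies $u<_\sigma v$), the relation $x_p <_\sigma x_1$ together with $\preceq$-comparability of $\pi(x_p)$ and $\pi(x_1)$ along the path forces $\pi(x_p)\preceq\pi(x_1)$ — this is the easy half, obtained by walking along the path and noting that $\pi$-images of consecutive vertices are comparable, so $\pi(x_p)$ and $\pi(x_1)$ are themselves comparable (any common ancestor of incomparable images would have to be hit by the path, but comparability propagates), and the $\sigma$-order then pins down the direction.

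The substantive half is the claim that $\pi(x_1)\preceq\pi(x_i)$ for all $i\in\intv{2}{p-2}$, i.e.\ that the interior of the path (except possibly the last two vertices) stays weakly below $\pi(x_1)$ in the tree. The key step here is: consider the vertex $x_i$ with $i\in\intv{2}{p-1}$ minimizing $\depth(\pi(x_i))$, equivalently take $\pi(x_i)$ a $\preceq$-minimal image among the interior vertices; I claim this minimum is attained only possibly at $x_{p-1}$ once we also account for $x_p$. Concretely, suppose some interior index $i\le p-2$ has $\pi(x_i)$ not $\succeq\pi(x_1)$. Then $\pi(x_i)$ is a strict ancestor of, or incomparable to, $\pi(x_1)$; in either case there is an ancestor $w$ of $\pi(x_1)$ with $w\preceq\pi(x_i)$ and $w\prec\pi(x_1)$ (using that $\pi(x_1)$ and $\pi(x_i)$ are joined by the subpath, hence comparable, so incomparability is actually excluded and $\pi(x_i)\prec\pi(x_1)$ directly). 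Then $x_i <_\sigma x_1$ by the order's compatibility with $\pi$, contradicting $x_1 <_\sigma x_i$. The only escape is $i\in\{p-1,p\}$: $x_p$ is allowed to be $\sigma$-below $x_1$, and $x_{p-1}$ is adjacent to $x_p$ so its image can dip below $\pi(x_1)$ as the path "turns the corner" toward $x_p$ — which is exactly why the statement excludes $i=p-1$ (and $i=p$).

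So the proof is essentially: (1) comparability of $\pi$-images propagates along any path, so each pair $\pi(x_1),\pi(x_i)$ is $\preceq$-comparable; (2) for $i\le p-2$, if $\pi(x_i)\prec\pi(x_1)$ then $x_i<_\sigma x_1$, contradicting reachability, hence $\pi(x_1)\preceq\pi(x_i)$; (3) for the endpoint, $\pi(x_p)\preceq\pi(x_1)$ follows the same way from $x_p<_\sigma x_1$. The only delicate point — and the one I'd be most careful about — is making precise that "comparability propagates along a path", i.e.\ that $\preceq$-comparability is not literally transitive but the stronger statement that the $\pi$-image of an entire path lies on a single root-to-node chain union its branches in a way that keeps the two endpoints comparable; the clean way to phrase it is: the set $\{\pi(x_1),\dots,\pi(x_j)\}$ is a connected subtree of $B_\ell$ (since consecutive images are equal or adjacent), and a connected subtree containing two nodes of the same or different depth whose pairwise-consecutive members are comparable must actually be a path in $B_\ell$, hence totally ordered by $\preceq$ when restricted to any root-leaf chain — but for our purposes it suffices that $\pi(x_1)$ and $\pi(x_i)$ lie in a common connected subtree all of whose edges go between comparable pairs, which by an immediate induction forces them comparable. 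That induction is the one routine verification; everything else is a two-line order argument.
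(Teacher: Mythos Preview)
Your overall strategy---use $\preceq$-comparability of $\pi$-images together with the $\sigma$-ordering to pin down directions---is the paper's strategy too. The gap is in your step~(1): the assertion that ``comparability of $\pi$-images propagates along any path'' is false as stated, and both justifications you offer for it fail. First, consecutive $\pi$-images along an edge of $G_\ell$ are \emph{not} always equal or adjacent in $B_\ell$: a rib joins a vertex of $K^s$ to one of $\pred(t)$ where $s\prec t$ and $\depth(t)-\depth(s)=h(a)-1$ can be arbitrarily large, so $\{\pi(x_1),\dots,\pi(x_p)\}$ need not induce a connected subtree. Second, even if it did, a connected subtree of $B_\ell$ does not force its nodes to be pairwise $\preceq$-comparable (two siblings already give a counterexample). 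So the ``immediate induction'' you allude to, in the form you describe it, does not go through.

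The paper handles exactly this point with the separation argument you mention parenthetically and then abandon: if $\pi(x_1)$ and $\pi(x_i)$ (respectively $\pi(x_p)$) were $\prec$-incomparable, the union of the bags of their common ancestors would separate them in $G_\ell$, so some intermediate $x_j$ on the subpath from $x_1$ to $x_i$ would satisfy $\pi(x_j)\prec\pi(x_1)$, hence $x_j<_\sigma x_1$, contradicting that $x_j$ is internal. There is also a correct induction available, different from yours: assuming $\pi(x_1)\preceq\pi(x_{i-1})$, the edge $x_{i-1}x_i$ makes $\pi(x_i)$ comparable to $\pi(x_{i-1})$; if $\pi(x_i)\preceq\pi(x_{i-1})$ then $\pi(x_i)$ and $\pi(x_1)$ are both ancestors of $\pi(x_{i-1})$ and hence comparable (ancestors of a fixed node form a chain), after which the $\sigma$-constraint rules out $\pi(x_i)\prec\pi(x_1)$. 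Either route works, but the incomparable case is where the actual content lies and cannot be replaced by a bare ``propagation'' claim.
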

    \begin{proof}
    We start with the first part of the statement. Towards a contradiction, let us suppose that $\pi(x_p)\npreceq \pi(x_1)$. By definition of reachability we have $x_p <_\sigma x_1$, so by definition of $\sigma$ we deduce that $\pi(x_1)$ and $\pi(x_p)$ are $\prec$-incomparable. 
    Recall that in $G_\ell$, if $uv$ is an edge then $\pi(u)$ and $\pi(v)$ are $\prec$-comparable. So in $G_\ell$, $x_1$ and $x_p$ are separated by the union of the bags of the common ancestors of $\pi(x_1)$ and $\pi(x_p)$. Hence for some $i\in\intv{2}{p-1}$, the node $\pi(x_i)$ is a common ancestor of $\pi(x_1)$ and $\pi(x_p)$. But then $x_i <_\sigma x_p$, a contradiction. Hence $\pi(x_p) \preceq \pi(x_1)$.
    The argument is similar for the second part of the statement: assuming it does not hold for some $i\in\intv{2}{p-1}$, we deduce that $\pi(x_1)$ and $\pi(x_i)$ are $\prec$-incomparable and thus there is a $j\in \intv{2}{i-1}$ such that $\pi(x_j)$ is a common ancestor of $\pi(x_1)$ and $\pi(x_i)$, implying the contradictory $x_i <_\sigma x_1$. \cqed
    \end{proof}

    Let $x$ in $V(G_\ell)$ and let $d$ be the depth of $x$.

    \begin{claim}\label{cl:8}
        At most 8 vertices $y$ are $r$-reachable from $x$ via a path whose edge incident to $y$ is not a rib.
    \end{claim}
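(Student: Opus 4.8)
The plan is to analyze, for the fixed vertex $x$ of depth $d$, all vertices $y$ that are $r$-reachable from $x$ via a path $x = x_1 \dots x_p = y$ whose last edge $x_{p-1}y$ is not a rib, and bound their number by a small constant. By Claim~\ref{cl:sablier} we already know that $\pi(y) \preceq \pi(x)$ and that every internal node $\pi(x_i)$ (for $2 \le i \le p-2$) is a descendant of $\pi(x)$ or equal to it. The key observation I would exploit is that $x_{p-1}$ is an internal vertex of the path, so by $r$-reachability $x < x_{p-1}$, hence $\pi(x) \preceq \pi(x_{p-1})$. Combined with the fact that the final edge $x_{p-1}y$ is \emph{not} a rib --- so it is a tree edge or a clique edge, both of which by construction connect vertices lying in the same bag or in bags of a parent--child pair in $B_\ell$ --- this forces $\pi(x_{p-1})$ to lie in the bag of $\pi(x)$, the bag of the parent of $\pi(x)$, or the bag of a child of $\pi(x)$ (and symmetrically $\pi(y)$ lies in one of those bags, but also $\pi(y) \preceq \pi(x)$, so actually $\pi(y)$ is $\pi(x)$ or its parent).

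More precisely, I would argue as follows. Since $x_{p-1}y$ is a tree edge or clique edge, $\pi(x_{p-1})$ and $\pi(y)$ are either equal or form a parent--child pair. Since $\pi(x) \preceq \pi(x_{p-1})$ and $\pi(y) \preceq \pi(x)$, the only possibility is that $\pi(x_{p-1}) \in \{\pi(x), \text{a child of } \pi(x)\}$ wait --- one must be careful. Let me instead phrase it: $\pi(y) \preceq \pi(x) \preceq \pi(x_{p-1})$, and $\pi(y), \pi(x_{p-1})$ are comparable and adjacent-or-equal in $B_\ell$, so the whole chain $\pi(y) \preceq \pi(x) \preceq \pi(x_{p-1})$ spans at most one edge of $B_\ell$; hence either $\pi(y) = \pi(x)$ (and $\pi(x_{p-1})$ is $\pi(x)$ or a child of it), or $\pi(y)$ is the parent of $\pi(x)$ and $\pi(x_{p-1}) = \pi(x)$. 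In either case $\pi(y) \in \{\pi(x), \text{parent of } \pi(x)\}$, so $y$ lies in the bag $\pi^{-1}(\pi(x))$ or in $\pi^{-1}(\text{parent of }\pi(x))$. By Remark~\ref{rem:preimage-partition} each of these two bags has size at most $7$, but I want the bound $8$, so I need to prune one vertex: the vertex $y$ cannot equal $x$ itself (as $y <_\sigma x$), and one checks that at most one of the two bags can be the root bag of size $3$. Taking the two bags together and removing $x$ gives at most $7 + 7 - 1 = 13$, which is too weak; so I must be more careful about which vertices in the parent bag are actually reachable.

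The refinement I expect to need: among the $\le 7$ vertices of the parent bag $\pi^{-1}(\text{parent of }\pi(x))$, a vertex $y$ there can only be reached from $x$ if the last edge $x_{p-1}y$ is incident to $\pi^{-1}(\pi(x))$; the only edges of $G_\ell$ between $\pi^{-1}(\pi(x))$ and its parent bag are tree edges joining the top edge of $K^{\pi(x)}$ to predecessors of $\pi(x)$ --- that is, at most $2$ vertices of the parent bag (the two endpoints of $L$ and $R$ meeting the top edge) are even adjacent to $\pi^{-1}(\pi(x))$, wait I need to recount against the blow-up definition: the top edge $\mu_{\pi(x)}(\text{parent})$ has two endpoints $u_1, u_2$ in $K^{\pi(x)}$, and $u_1$ connects via $L$ to a bottom-predecessor-type vertex and $u_2$ via $R$; the predecessors of $\pi(x)$ that neighbor $K^{\pi(x)}$ across the tree edge are exactly two vertices of $\pred(\pi(x)) \subseteq \pi^{-1}(\pi(x))$ --- hmm, those are in $\pi(x)$'s own bag, not the parent's. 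So actually the tree edges from $K^{\pi(x)}$ go \emph{down} to children's predecessors, and the top edge connects $K^{\pi(x)}$ to $K^{\text{parent}}$ only through the subdivision path, whose internal vertices are $\tpred$ and $\bpred$ of $\pi(x)$, living in $\pi^{-1}(\pi(x))$. Re-examining: the bag of $\pi(x)$ is $V(K^{\pi(x)}) \cup \pred(\pi(x))$, and $\pred(\pi(x))$ consists of the subdivision vertices on the two length-$4$ paths from the parent's clique down to $K^{\pi(x)}$; so the only vertices of $\pi^{-1}(\text{parent})$ adjacent to $\pi^{-1}(\pi(x))$ are the two endpoints of those paths lying in $K^{\text{parent}}$. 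Therefore at most $2$ vertices of the parent bag are 1-step-adjacent to $\pi(x)$'s bag, and a reachable $y$ in the parent bag must be one of these or reachable through a longer detour staying below $x$ --- but Claim~\ref{cl:sablier} already confines all internal nodes to descendants of $\pi(x)$, so in fact $x_{p-2}$ lies in $\pi^{-1}(\pi(x))$ and $y = x_{p-1}$ hmm no, $y = x_p$. Let me just say: $x_{p-1} \in \pi^{-1}(\pi(x))$ or $x_{p-1} \in \pi^{-1}(\text{child})$, and if $y$ is in the parent bag then $x_{p-1} \in \pi^{-1}(\pi(x))$ and $y$ is one of the $\le 2$ clique vertices of the parent incident to the subdivision paths into $\pi(x)$.

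\begin{proof}
    Let $x = x_1 \dots x_p = y$ be a path witnessing that $y$ is $r$-reachable from $x$, where the edge $x_{p-1}y$ is not a rib, hence is a tree edge or a clique edge. By definition of $r$-reachability, $x <_\sigma x_i$ for every $i \in \intv{2}{p-1}$ and $y = x_p <_\sigma x$. In particular, by the choice of $\sigma$, $\pi(x) \preceq \pi(x_{p-1})$ and $\pi(y)$ is not a strict descendant of $\pi(x)$; since moreover $\pi(y)$ and $\pi(x)$ are $\preceq$-comparable (by Claim~\ref{cl:sablier}, $\pi(y) \preceq \pi(x)$), we have $\pi(y) \preceq \pi(x)$.

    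Now $x_{p-1}y$ is a tree edge or a clique edge, so by the definition of the blow-up its endpoints lie in a common bag or in the bags of a parent--child pair of $B_\ell$; in either case $\pi(x_{p-1})$ and $\pi(y)$ are equal or $\preceq$-adjacent. Combining with $\pi(y) \preceq \pi(x) \preceq \pi(x_{p-1})$, the chain from $\pi(y)$ to $\pi(x_{p-1})$ spans at most one edge of $B_\ell$. Hence either $\pi(y) = \pi(x)$, or $\pi(y)$ is the parent of $\pi(x)$ and $\pi(x_{p-1}) = \pi(x)$. In both cases $\pi(y)$ belongs to $\{\pi(x)\} \cup \{\text{parent of } \pi(x)\}$, so $y$ lies in $\pi^{-1}(\pi(x))$ or in $\pi^{-1}(t)$ where $t$ is the parent of $\pi(x)$.

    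If $y \in \pi^{-1}(\pi(x))$, this contributes at most $|\pi^{-1}(\pi(x))| \le 7$ candidates (Remark~\ref{rem:preimage-partition}). If $y \in \pi^{-1}(t)$ for the parent $t$ of $\pi(x)$, then as shown above $\pi(x_{p-1}) = \pi(x)$, so the edge $x_{p-1}y$ joins a vertex of $\pi^{-1}(\pi(x))$ to a vertex of $\pi^{-1}(t)$; by the construction of the blow-up, the only such edges are the two tree edges joining the endpoints of the paths $L$ and $R$ (associated with the top edge of $K^{\pi(x)}$) to their endpoints in $K^t$. Hence $y$ is one of at most $2$ vertices of $\pi^{-1}(t)$.

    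In total, at most $7 + 2 = 9$ vertices $y$ are $r$-reachable from $x$ via such a path; removing $y = x$ itself --- which is impossible since $y <_\sigma x$ --- wait, $x$ may or may not be among the counted vertices. To tighten to $8$: when $\pi(x)$ is the root, $|\pi^{-1}(\pi(x))| = 3$ and $\pi(x)$ has no parent, giving at most $3 \le 8$; when $\pi(x)$ is not the root, $x \in \pi^{-1}(\pi(x))$ is one of the $7$ vertices of that bag but cannot equal $y$, leaving at most $6$ from that bag, plus $2$ from the parent bag, for a total of at most $8$. This proves the claim.
\end{proof}

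The step I expect to be the genuine obstacle is the bookkeeping in the last two paragraphs: correctly identifying, from the precise blow-up construction, exactly which vertices of the parent bag are adjacent to $\pi^{-1}(\pi(x))$ (so that the parent bag contributes $2$ and not $7$), and handling the root bag separately so that the final count lands on $8$ rather than something larger. Everything else follows routinely from Claim~\ref{cl:sablier} and the fact that tree and clique edges respect the bag structure of $B_\ell$.
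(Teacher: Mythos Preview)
Your proof is correct and essentially identical to the paper's: both sandwich $\pi(y) \preceq \pi(x) \preceq \pi(x_{p-1})$, note that a non-rib edge makes $\pi(y)$ and $\pi(x_{p-1})$ equal or adjacent in $B_\ell$, conclude $\pi(y)\in\{\pi(x),\text{parent of }\pi(x)\}$, and count $7+2-1=8$. The only quibble is that ``by the choice of $\sigma$'' alone does not yield $\pi(x) \preceq \pi(x_{p-1})$ (it rules out $\pi(x_{p-1})\prec\pi(x)$ but not incomparability); you should invoke Claim~\ref{cl:sablier} there, exactly as the paper does.
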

    \begin{proof}
        Let $y$ be such a vertex and let $z$ be the other endpoint of the aforementioned edge.
        By definition we have $y <_\sigma x \leq_\sigma z$ so by Claim~\ref{cl:sablier} we get $\pi(y) \preceq \pi(x) \preceq \pi(z)$. Observe that if we ignore ribs, the graph in hand is merely the blow-up of a tree so the neighborhood of $z$ is included in the union of the bag of $\pi(z)$ and the bags of the nodes that are neighbors of $\pi(z)$. So the above inequality implies that $y$ is either in the bag of $\pi(x)$ or in the bag of the parent $t$ of $\pi(x)$. We can further reduce the choices for $y$ by observing that only two vertices of the bag of $t$ have neighbors in the bag of $\pi(x)$ and also by noting that $y\neq x$.
        This leaves at most $7+2-1=8$ possible choices for $y$, as claimed.
        \cqed
    \end{proof}

    Let $Y$ denote the set containing every vertex $y$ that is $r$-reachable from $x$ via a path whose edge incident to $y$ is a rib. We call $z_y$ the other endpoint of such an edge.
    So now in order to conclude the proof it is enough to prove the following statement.
    \begin{claim}\label{cl:2r}
        $|Y| \leq 2r$.
    \end{claim}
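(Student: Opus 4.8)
The plan is to bound $|Y|$ by showing that each vertex $y\in Y$ is reached through a short path that, after leaving $x$, quickly descends to $\pi(y)$ and that there are only $O(r)$ candidate bags involved. First I would recall the structure of the reaching path: if $y$ is reached from $x$ via $x=x_1\dots x_p=y$ with the last edge $x_{p-1}x_p$ a rib, then by Remark~\ref{rem:depth-decreasing} (or the definition of ribs) the clique endpoint of this rib has smaller depth than the subdivision endpoint; since $y<_\sigma x$ and $\sigma$ refines the ancestor order, $\pi(y)\preceq\pi(x)$, so $y$ must be the clique endpoint $z_y$ the subdivision endpoint, and $z_y$ lies in the bag of a strict descendant of $\pi(y)$. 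Moreover $\pi(y)$ is a source: $y$ is part of the top edge of $K^{\pi(y)}$, and $z_y\in\pred(t)$ for some descendant $t$ whose depth is the upper endpoint of the interval witnessing that $\pi(y)$ is a source.

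Next I would use Claim~\ref{cl:sablier}: every internal vertex $x_i$ with $i\in\intv{2}{p-2}$ satisfies $\pi(x_1)\preceq\pi(x_i)$, i.e.\ lies in the bag of a descendant of $\pi(x)$, while $\pi(x_p)=\pi(y)\preceq\pi(x)$. So the path starts in the bag of $\pi(x)$, stays weakly below $\pi(x)$ for all internal vertices except possibly $x_{p-1}$, and the final two vertices $x_{p-1},x_p$ lie in the bags of, respectively, a descendant $t$ of $\pi(y)$ at the deep end of the source interval, and $\pi(y)$ itself. Since $\pi(y)\preceq\pi(x)$ and $\pi(x)$ (via the internal vertices) is weakly above all the $x_i$'s with $i\le p-2$, the node $\pi(y)$ is a source that is an ancestor of $\pi(x)$ (or equal) whose deep leaves are descendants of the penultimate vertex's node. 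The key point is that the path has at most $r$ edges, so it descends at most $r$ levels below $\pi(x)$; hence the penultimate vertex $x_{p-1}$ lies at depth at most $\depth(x)+r$. This in turn forces, for each such $y$, that the source $\pi(y)$ has its deep endpoint $j$ satisfying $j\le \depth(x)+r$ — because the rib goes from the top edge of $K^{\pi(y)}$ to a predecessor at depth $j$, and that predecessor is $x_{p-1}$ which is within $r$ of $\depth(x)$. Together with $\pi(y)\preceq\pi(x)$, the interval $(i,j)$ of $\pi(y)$ has $i\le\depth(x)$ and $j\le\depth(x)+r$.

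Now I would count. By the non-crossing/nesting property (Remark~\ref{rem:nestint}, item~4) and the fact that the sources that are $\preceq$-ancestors of $\pi(x)$ correspond to a nested chain of intervals all containing $\depth(x)$, there is exactly one source-ancestor of each rank, and its interval has a distinct deep endpoint $j$. Among these nested intervals containing $\depth(x)$, at most... well, those with deep endpoint $j\le\depth(x)+r$ — since the endpoints are all distinct integers in a range, there are at most $r+1$ of them, but more carefully, consecutive nested intervals roughly double in the function $h$, so only $O(\log r)$ would have $j-\depth(x)\le r$; I'd keep the crude bound of $r$ source-ancestors to be safe, since we only need a linear bound. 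For each such source $s=\pi(y)$ there are exactly two vertices on the top edge of $K^s$, so at most $2$ choices of $y$ per source. Hence $|Y|\le 2\cdot(\text{number of admissible source-ancestors})\le 2r$, giving Claim~\ref{cl:2r}. Combining with Claim~\ref{cl:8}, every vertex has at most $2r+8$ vertices $r$-reachable from it, so $\col_r(G_\ell)\le 2r+8$, which is linear in $r$ and proves Theorem~\ref{th:linecol}.

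The main obstacle I expect is making the depth-counting argument airtight: one must carefully verify that along the reaching path, the depth does not oscillate in a way that lets the path reach a source whose deep endpoint is far below $\depth(x)$ while still using at most $r$ edges, and that each distinct admissible source really does contribute a distinct pair of candidate vertices $y$ (no double counting across sources of different ranks). The separation lemma (Lemma~\ref{lem:separation}) and the fact that a rib's clique endpoint is strictly shallower than its subdivision endpoint are the tools that pin down which bag $x_{p-1}$ can live in; the bookkeeping of "at most $r$ edges $\Rightarrow$ deep endpoint within $r$ of $\depth(x)$" is where I would be most careful, since the path may wander among bags of descendants of $\pi(x)$ before the final rib.
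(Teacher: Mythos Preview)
Your overall shape is right—each $y\in Y$ lies on the top edge of some source that is an ancestor of $\pi(x)$, and each such source contributes at most two vertices—but the step you flag as ``where I would be most careful'' is in fact where the argument breaks. The assertion that ``the path has at most $r$ edges, so it descends at most $r$ levels below $\pi(x)$, hence $\depth(x_{p-1})\le \depth(x)+r$'' is false. A single rib can carry the path from depth $i$ to depth $i+h(a)-1$ for arbitrarily large $a$; for instance, if $\pi(x)$ is itself a source of rank $a$, one rib already lands at depth $d+h(a)-1\gg d+r$. Concretely, from a vertex $u_1$ on the top edge of a source $s$ one can follow a rib to some $w\in\tpred(t)$ (with $t$ a leaf of $B_\ell(s)$), a tree edge to the matching vertex of $\bpred(t)$, and a rib back to $u_2$: this gives a length-$3$ witnessing path with $z_y$ at depth $d+h(a)-1$. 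So neither ``$\depth(z_y)\le d+r$'' nor ``the deep endpoint $j$ satisfies $j\le d+r$'' holds, and your count of at most $r$ admissible source-ancestors is not supported.

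What replaces the depth bound is a \emph{layer} argument, and this is exactly what the paper does. List the intervals $(i_m,i'_m)\in\IS_\ell$ of the form $(\depth(y),\depth(z_y))$, ordered so that $i_k<\dots<i_1\le d\le i'_1<\dots<i'_k$ (nestedness gives this chain). By the non-crossing property there is no interval $(i,i')$ with $d\le i< i'_m<i'$, so no edge with both endpoints at depth $\ge d$ jumps over any level $i'_m$. Hence the path from $x$ to a $z_y$ at depth $i'_k$, which stays at depth $\ge d$, must contain a vertex at each of the $k$ distinct depths $i'_1,\dots,i'_k$; its length is therefore at least $k-1$, forcing $k\le r$. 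Now your last step applies verbatim: the $y$'s sit on the top edges of $k\le r$ ancestors of $\pi(x)$, two per ancestor, so $|Y|\le 2r$. The missing idea is thus not a bound on how deep $z_y$ can be, but the observation that reaching it forces the path through every intermediate deep endpoint.
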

    \begin{proof}
    Recall that if $yz$ is a rib for some vertex $z$ then $(\depth(y), \depth(z)) \in \IS_\ell$.
    Let
    \[(i_1, i'_1), \dots, (i_k, i'_k) \in \IS_\ell\] denote the intervals of the form $(\depth(y), \depth(z_y))$ for $y\in Y$, ordered so that $i_1>\dots >i_k$.
    By the property that intervals of $\IS_\ell$ do not cross (Remark~\ref{rem:nestint}) and the fact that $\pi(x)\preceq \pi(z_y)$ for all such $z_y$ (Claim~\ref{cl:sablier}), we have the following ordering:
    \[
    i_k < \dots < i_1 \leq d \leq i'_1 <\dots i'_k.
    \]
    For the same reason there is no $(i,i') \in \IS_\ell$ with $d\leq i < i'_{j} < i'$ for some $j \in \intv{1}{k}$, so no rib of $G_\ell$ joins a vertex of depth $i$ to one of depth $i'$ for such $i,i'$. Therefore any path $P$ with vertices in $\{z\in V(G) : x \leq_\sigma z\}$ from $x$ to a vertex of depth $i'_k$ contains a vertex of depth $i'_j$ for every $j\in \intv{1}{k}$. This shows that $P$ has length at least $k-1$, hence $k\leq r$.

    Recall that by Claim~\ref{cl:sablier}, every $y\in Y$ belongs to the bag of an ancestor of $\pi(x)$. So $Y$ is included in the union of the bags of the ancestors of $\pi(x)$ of respective depths $i_1, \dots, i_k$.
    Furthermore, for each node $t\in V(B_\ell)$, only the two endpoints of the root edge of the bag of $t$ are incident to ribs to vertices in the bags of deeper nodes.
    So we get $|Y|\leq 2k \leq 2r$, as claimed.
    \cqed   
    \end{proof}
    From Claims~\ref{cl:8} and \ref{cl:2r} we get that $\col_{r}(G_\ell) \leq 2r+8$, as desired.
\end{proof}

\section{Discussion}
\label{sec:ohpeine}

In this paper we described a construction disproving Conjecture~\ref{conj:esp} by tightening the upper-bound on the function $f_k$ of Question~\ref{que:nodm} as follows:
\[
\forall n,k\in \N_{\geq 2},\quad \frac{\log \log n}{\log (k+1)} \quad \leq \quad  f_k(n) \quad \leq  \quad c (\log \log n)^2,
\]
for some constant $c>0$.
So far there is no evidence that the $O((\log \log n)^2)$ bound could be asymptotically tight so we did not try to optimize $c$ in our proofs.\footnote{It seems that the same construction starting with a $k$-ary tree instead of a binary one, we could win a $\frac{1}{\log(k+1)}$ factor for $k$-degenerate graphs.} 
We however note that our bound cannot be asymptotically improved by conducting a different analysis on the same construction, as our graphs contain induced paths of order $\Omega((\log \log n)^2)$; see Figure~\ref{fig:longlip} for an example.
The main open problem following this work is to determine the exact order of magnitude of~$f_k$. 
An other (possibly more difficult) problem is to find the optimal bound in Theorem~\ref{th:grs}.

Let us now ask a more general question.
For a graph class $\mathcal{G}$ that excludes some biclique as subgraph,\footnote{As discussed in the introduction, excluding arbirarlily large cliques and bicliques is necessary (in hereditary classes) for such a function to exist.} let $f_\mathcal{G}\colon \N\to\mathbb{R}$ be the maximum function such that for every $G\in \mathcal{G}$, if $G$ has a path of order $n$ then it has an induced path of order at least $f_\mathcal{G}(n)$.
In this paper we proved that there are classes $\mathcal{G}$ with linearly bounded coloring numbers (hence linearly bounded expansion) and where $f_\mathcal{G}(n) = O((\log \log n)^2)$. On the other hand, for several graph classes $\mathcal{G}$ it is known that $f_\mathcal{G}(n) = \Omega((\log n)^c)$ for some $c>0$. Such bounds exist for instance for planar graphs (Theorem~\ref{th:elm}), graph classes of bounded treewidth, more generally any class that excludes a topological minor (Theorem~\ref{th:claire}).
This exhibits two fundamentally different behaviors for the function~$f_\mathcal{G}$. In \cite{hilaire2022long} a third behavior is identified: graph classes where $f_\mathcal{G}$ is polynomial, such as graphs of bounded pathwidth.
It is therefore natural to ask: which criteria distinguish graph classes $\mathcal{G}$ such that $f_{\mathcal{G}}$ is polynomial, thereforepolylogarithmic, and doubly-polylogarithmic?

\begin{figure}[H]
    \centering
    \includegraphics[scale=1.1]{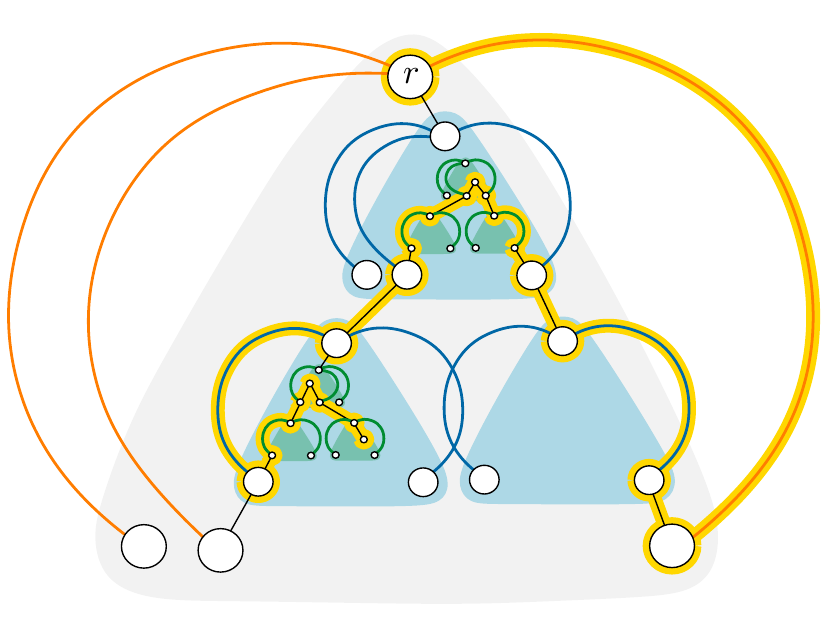}
    \caption{A representation (highlighted in gold) of an induced path of order $\Omega((\log \log n)^2)$  in $G_\ell$ starting from its root bag. For readability, only its "trace" in $B_\ell$ is represented together with the intervals (here drawn in a curved way) it follows, i.e., the actual path in $G_\ell$ may be obtained by replacing each highlighted node $s\in V(B_\ell)$ by appropriate vertices in $K^s$, each highlighted edge $ss'\in E(B_\ell)$ by an appropriate path $L(s,s')$ or $R(s,s')$, and each highlighted interval by an appropriate rib it originates from. A careful analysis shows that the order of such a path is $\Omega(\ell^2)$.}
    \label{fig:longlip}
\end{figure}


\section*{Acknowledgements}

We thank Louis Esperet for suggesting us to investigate whether our construction has bounded expansion, which led to Section~\ref{sec:colnum}.

\bibliographystyle{alpha}
\bibliography{main}

\end{document}